\documentclass[12pt]{amsart}

\usepackage[margin=1in,footskip=.5in]{geometry}

\usepackage{etoolbox}
\makeatletter
\let\ams@starttoc\@starttoc
\makeatother
\usepackage[parfill]{parskip}
\makeatletter
\let\@starttoc\ams@starttoc
\patchcmd{\@starttoc}{\makeatletter}{\makeatletter\parskip\z@}{}{}
\makeatother

\usepackage{graphicx}
\newcommand{\heart}{\ensuremath\heartsuit}


\renewcommand{\Pr}{\text{Pr}}

\DeclareFontFamily{U}{min}{}

\DeclareFontShape{U}{min}{m}{n}{<-> udmj30}{}


\usepackage{amsthm,aliascnt}
\usepackage{url}
\newcommand{\mynewtheorem}[2]{
  \newaliascnt{#1}{theorem} 
  \newtheorem{#1}[#1]{#2}
  \aliascntresetthe{#1}
  \expandafter\def\csname #1autorefname\endcsname{#2}
}

\theoremstyle{definition}

\mynewtheorem{thm}{Theorem}
\mynewtheorem{proposition}{Proposition}
\mynewtheorem{p}{Proposition}
\mynewtheorem{lemma}{Lemma}
\mynewtheorem{lem}{Lemma}
\mynewtheorem{corollary}{Corollary}
\mynewtheorem{cor}{Corollary}
\mynewtheorem{conjecture}{Conjecture}

\theoremstyle{definition}
\mynewtheorem{exercise}{Exercise}
\mynewtheorem{definition}{Definition}
\mynewtheorem{de}{Definition}
\mynewtheorem{example}{Example}
\mynewtheorem{conj}{Conjecture}
\mynewtheorem{cons}{Construction}
\mynewtheorem{notation}{Notation}
\mynewtheorem{assu}{Assumption}

\mynewtheorem{Example}{Example}
\mynewtheorem{remark}{Remark}
\mynewtheorem{rk}{Remark}

\newcounter{question}

\newcommand{\mynewtheoremA}[2]{
  \newaliascnt{#1}{question} 
  \newtheorem{#1}[question]{#2} 
  \aliascntresetthe{#1}
  \expandafter\def\csname #1autorefname\endcsname{#2}
}

\mynewtheoremA{Question}{Question} 

\newcounter{researchgoal} 

\newcommand{\mynewtheoremB}[2]{
  \newaliascnt{#1}{researchgoal} 
  \newtheorem{#1}[researchgoal]{#2} 
  \aliascntresetthe{#1}
  \expandafter\def\csname #1autorefname\endcsname{#2~Section}
}

\mynewtheoremB{ResearchGoal}{Research Goal} 


\renewcommand*{\do}[1]{%
\expandafter\newcommand \csname #1#1\endcsname{\mathbb{#1}}}
\docsvlist{B,C,D,E,F,G,H,I,J,K,L,N,O,P,Q,R,T,U,V,W,X,Y,Z}

\renewcommand*{\do}[1]{%
\expandafter\newcommand \csname f#1\endcsname{\mathfrak{#1}}}
\docsvlist{
a, A, b,B, c, D, e, E, f,F,g,h, j, k, C, l, K, S, N,  Y,  m, M 
n,o,p,q,r,s,t,u,v,w, X, 
gl, sl, T, so, su,z,Z, sp}

\renewcommand*{\do}[1]{%
\expandafter\newcommand \csname c#1\endcsname{\mathcal{#1}}}
\docsvlist{A,B,C,D,E,F,G,H,I,J,K,L,M,N,O,P,Q,R, RP, DP, S,T,U,V,W,X,Y,Z, GW}

\renewcommand*{\do}[1]{%
\expandafter\newcommand \csname
b#1\endcsname{\mathbf{#1}}}
\docsvlist{A,B,C,D,E,F,G,H,I,J,K,L,M,
N,O,P,Q,R, RP, DP, S,T,U,V,W,X,Y,Z, GW}



\newcommand{\DeclareMyOperator}[1]{%
\expandafter\DeclareMathOperator\csname #1\endcsname{#1}}
\newcommand{\DeclareMathOperators}{\forcsvlist{\DeclareMyOperator}}

\DeclareMathOperators{Isom, Aut,ch, KU, map,  End, Sym, Hom, Map, Res, Rep, Spec, CAT, Spa, Spd, Gr, PGL, MV, act, Shv, Sch}

\renewcommand*{\do}[1]{%
\expandafter\newcommand \csname #1\endcsname{\text{#1}}}
\docsvlist{
Grp, LieGrp, poly, Surg, ds,  Moduli,Ge, DM, DTM, MTM,  std, PsR,  ess, Ramj,Ger,  sAb, lis, CS,  Tens, alm, cart, Term, Tm,  ccart, haus, CRng,Int, Auto, coeff,  ext, Drinf, Cor, LCat, Graph, Cdga,Nis, Bool, Stone,   art,  Gerb, col, Conv, Cur,  Cpt, light, relint, Reg, Hk, MM,Hdg,  Borel, RKE, GLike, JM, LP, WDP, aRep,Conf, KL, Kir, Av, CSS, Poset, Lax,  Gro,lax,  LLC, HShv, JH, coev, RMod, MW, perf, coeq, Fml, Rig, cHk, cWhit, pfp,  PHS, MHS, MHT, Quasi, AStk, CStk, CAni, CAStk, CSet, RTop, ASpec, desc, ARng, Berk, Rigid, Huber, Pair, afnd, Adic, Hck, sheafy, Dia, Flag, Disk, Isog, Dem, Tate, Afnd, Cpl, Dr, Rg, Ffgs,pf, dRg, dAf, dStk,   coAlg,  Dih, Liq, Prim, PSt, Groth, MT, LCH, Stacks, Anima, String, ccn, DAlg, CDGA, Tmf, tmf,RH,  GN, TM,  MString, Coinv, rs, Lin, Adm, Dieud, height, FGL, Stack, Dieu, Laf,  Arith, action, Nil, HP, TC, HN, Poly,basic,FIso, Nrd, cycl, Kt, asym,AVar, KeV, nm, Topoi, sqz, phase, stone, Cd, Corr, glob, indep, EC, Eis, cd, Toric,Sect, qi, dg,  cstr, Euc,  Semi,  bdry, Summable, Cont, Emb, Imm, Poiss, Proj,graph,  prop, torus, SH, Red,  disj,  Ran, dSch, Var,  SL, SO, DifDat, Rt, free, sch,  Ch, Spc,Alg, Mf, op, Top,cg,  Whit, PSh, quasi, Iso,RLP,  hol,RTF, Out,  SRTF, Pl, Mea,Sht,sph, CGWH, IC, Quillen, aHk, sHk, aShv, Funt,   Rie, GL, Sp, Mul, Geom, odia, SpSch,  Mono, CoCone, Com, Dist, LG, Par, cocompl, Weil, Higgs, LKE, PrSt, Cov, FWeil, KotTri,Virt,  dRing, red,Teich, TG, DMod,rlx,kir,  LS,Betti, fset, td, FV, Affinoid, AdicSpc, gas, Orb, fgt, Jet, Jets, adic,Div, elem, RHom, ctf, ob,  Nilp, Drag, Grass, Conn, crys,KTF, cp,  Jac,restr, Lisse, Fct, Vect,FAlg, Br, lisse, RO, Ana, sPShv, sShv, Obs, curve, DO,   trf, NE,  Artin, cond, Spl, qproj, Fun,Hilb, BCol, Path, Pos, coind,  Vel, Fl,  Time, Sol, EL, Adj, lc, Cat,Sober,  add, Mod,Pre, gplke, acus, cus, cocart, Algbrd,  Tors, qsplit, Mor, Eq, FT, arith, Band, Dynk, uf, TQFT,  sInd, Topos, Op,Irr,wild, PreStk,  supcus, tor, val, open, oblv, DK, CW,  Perv, Str, Bi, Cht, co, aut,  Crys, eff, eq, laur, GSp, Dec, coim,  mult, Para, Gerbe, Sel, photon,   gen, BG, coh,  AM,MG, mot, Dtb, Test, Msr, fix, rat,  AG, dense,Syl, spec, unit, counit, adm, scus, sqint, unip, cen, ndeg, Mass, JL,  Isoc,  Fin,Assoc, dAff, cdga,  geom, scn, irr,  Hil, irred, LC, ven, Comm, qspl, edst,  Car, ord, Bor, FMP, uni, RCl,  spl,  ChAff, holim, hocolim, SysLoc, Core, Supp,colim,  tdlc,AV,  prim,  aniso,STF,TF, naive, rad, BS, Spm, Pftd, enh, Dmod, VB, noet, integral,codim,  can,  cCAlg, GLke,  CDiv, CFact,   lcm, Ord, inc, Cone, Krull, depth, Ann,  HT, cris, trd, QLisse, coarse,  nrd, iso, Area, ST, tr, Sys,  Lie, isog,   typ, coMod, Hopf, Set, idem, fin, id, dia, Indx, der, Her,Sat, DGCat,   HSD, h,tf, Free, Tori, Cox, Grph, Hecke, str, Mp, RS, match, Autom, Four, Rad, dsr, St,vol,WD, ur, IH,  Cart,SET, Haus, nInd, GU, TO, Spin, PO, BC,  coCart,Tw, st, verd, kar, SpDM, rex, grp, lex, Un, BiFun,cofib, fib, Fm,TMF, BP,  CAlg,FPoly, sat, FFM, PreSp, seq, gpcpl, scts, cjg, Cts, MS, LT,  Ani, Mon, Th, BO, MO, odd, Index, exact, closed, Td, Pro, cof,  Pn, TwAr, Wald, Cr, He, Joyal, LMod, Triv, CT, GrAb,LocSys, pt, Bun, cusp, Pic,Loc, diff, filt, inj, sgn, GW, cInd, Yu,triv, Sh,  hyp, Frob, CM, univ,cores,  res, FG, Mfld, Stab, Span, sm,  QCoh, Zar, refl, type, ind, rec, cst, inv, ap, Inn, GLC, inl, inr, fd, Uni, car,  isconstr, Bord, framed, even, coinv, Sph,  SupFormMfd, ev, PShv, cts, reg, dR, Man, proj, Kan, Qui, AJ,  nr, Kl,  Nm, CNL, ab, PSL, pro, tame,  Latt,cyc, Spf,  Prop, funExt, isProp, isBiinv, Sq, whisker,  cha, Hmlgy, Simple, nGrp, ValSub, TotAbGrp, cls, GO, PS, Null, Tot, Spv, hgt, snRng, Frac, TopRng, Rng, mer, hor, level, KM,  snGrp,  cpt,  HS, trp, frm, Pet,  ori, spin, CGHaus, Inert, CHaus, ED, ms, aff, affine,
cn, Ind, ex,Ab, Met, Hyp, lag, NA, Ar, SymSpc,  IndCoh, LieAlg, dgLie, Ho, MC, Ext, Perf, Solid, sing, Mat, Cond, Ring, Efim, Mot, Nuc, CH, im, dlog, rig, coker, SCR, HC, Fil, gr, HKR, Pf, WhtStr, StrWht, Sm, pr, IW, CoMod, THH, nil, loc, FL,  Cent,WEIL,  FSet, FFMod, cl, Der, Aff, fpqc, Forms, fgp,  SU, Tr, SmProj, an, QC, Coh, Gpd, MU, MSO, non,  LCA, alg, An, Ell, BU,cone, norm, qc,extr,AW,  Ad, fact, ren,  pants, KO, MF, Bl,Li, dilog, pre, geo,  Fr,RM, Fix,  cpl, cplt, Cst,chern,Cplx,  DefP, fppf, Rel, supp, aug, Exc,Gal, temp, disc, Disc, trg,  ad, stk, Quot, forget, SCRMod, CMon, Stk,  sMod, csMod, CwF, Cph, sheaf, CwA, Ty, LCC, wcof, wfib, tors, unim, isom,sym, Diff, gw, gq,gs, bord,  Cob, BK, rel, met, Art, sep, Sing, gp, CoInd, length, elliptic, obs, EZ, Tor, narc,Wh, Conj, Cjg, nc, Hol, decomp, Irrep, ULA, Iw, Field, ins, 
isIso, isequiv, isContr, isEquiv,Equiv, Sub, Type, Cl, wCat, Tribe, DG, Prof, sh, AlgSp, AlgStk, Oblv, mon}

\renewcommand*{\do}[1]{%
\expandafter\newcommand \csname #1\endcsname{\mathrm{#1}}}
\docsvlist{tot, is-contr}

%


\usepackage[OT2,T1]{fontenc}
\DeclareSymbolFont{cyrletters}{OT2}{wncyr}{m}{n}
\DeclareMathSymbol{\Sha}{\mathalpha}{cyrletters}{"58}

\DeclareFontFamily{T1}{cbgreek}{}
\DeclareFontShape{T1}{cbgreek}{m}{n}{<-6>  grmn0500 <6-7> grmn0600 <7-8> grmn0700 <8-9> grmn0800 <9-10> grmn0900 <10-12> grmn1000 <12-17> grmn1200 <17-> grmn1728}{}
\DeclareSymbolFont{quadratics}{T1}{cbgreek}{m}{n}
\DeclareMathSymbol{\qoppa}{\mathord}{quadratics}{19}
\DeclareMathSymbol{\Qoppa}{\mathord}{quadratics}{21}

\newcommand{\noi}{\noindent}


\newcommand{\ra}{\rightarrow}

\newcommand{\xra}[1]{\xrightarrow{#1}}

\newcommand{\hra}{\hookrightarrow}


\DeclareMathSymbol{\mh}{\mathord}{operators}{`\-}


\newcommand{\crbr}[1]{\left \{ #1 \right \} }
\newcommand{\smbr}[1]{\left( #1 \right) }


\usepackage{etoolbox}
\makeatletter
\patchcmd\@thm
  {\let\thm@indent\indent}{\let\thm@indent\noindent}%
  {}{}
\makeatother


\setcounter{tocdepth}{2}



\usepackage{amsfonts, amsmath, amssymb, stmaryrd, color, enumerate, relsize, mathrsfs,array}

\usepackage{ragged2e} 

\numberwithin{equation}{section}

\usepackage{CJKutf8}

\usepackage{float}

\usepackage[backend=biber, backref=true,
style=alphabetic, doi=false
]{biblatex}

\addbibresource{refs.bib}
\DeclareNameAlias{author}{family-given}



\usepackage[dvipsnames]{xcolor}

\usepackage{hyperref}
\hypersetup{colorlinks,linkcolor={blue},citecolor={blue},urlcolor={red}}

\usepackage{dynkin-diagrams}

\usepackage{tikz-cd}

\tikzcdset{
  cells={font=\everymath\expandafter{\the\everymath\displaystyle}},
}

\usepackage{scalerel, stackengine}

\usepackage{fullpage}

\usepackage{url, 
  amssymb,setspace, mathrsfs,fontenc}

\usepackage{graphicx}
\usepackage[mathcal]{euscript}
\usepackage{verbatim}
\usepackage{hyperref}

\usepackage[nameinlink]{cleveref} 
\crefname{thm}{Theorem}{Theorems}
\crefname{lem}{Lemma}{Lemmas}
\crefname{prop}{Proposition}{Propositions}
\crefname{rem}{Remark}{Remarks}
\crefname{defe}{Definition}{Definitions}
\crefname{exam}{Example}{Examples}
\crefname{notation}{Notation}{Notations}

\theoremstyle{remark}

\setlength{\marginparwidth}{1.2in}
\let\oldmarginpar\marginpar
\renewcommand\marginpar[1]{\-\oldmarginpar[\raggedleft\footnotesize #1]%
{\raggedright\footnotesize #1}}


\expandafter\let\expandafter\oldproof\csname\string\proof\endcsname
\let\oldendproof\endproof
\renewenvironment{proof}[1][\proofname]{%
  \oldproof[\slshape #1.]%
}{\oldendproof}

\usepackage{xcolor}
\definecolor{darkred}{rgb}{0.5,0,0} 

\usepackage{enumitem}

\usepackage{tikz-cd} 


\usepackage[backend=biber, backref=true,
style=alphabetic, doi=false]{biblatex}
\addbibresource{refs.bib}
\DeclareNameAlias{author}{family-given}

\usepackage[textsize=tiny]{todonotes}
\let\marginpar\oldmarginpar

\title{On a tamely ramified local relative Langlands conjecture via categorical representations}
\date{\today}

\begin{document}
\author{Milton Lin, Toan Pham, Jize Yu}
\address{Department of Mathematics, Johns Hopkins University, Baltimore, MD, U.S.A.}
\address{Department of Mathematics, Johns Hopkins University, Baltimore, MD, U.S.A.}
\address{Department of Mathematics, Rice University, Houston, TX 77005, U.S.A.}
\email{clin130@jh.edu}
\email{tpham45@jhu.edu}
\email{jy120@rice.edu}

\begin{abstract}
    Let $G$ be a complex reductive group. For a smooth affine
    spherical $G$-variety
    $X$, assume  
    that the unramified relative
    local Langlands 
    conjecture
    of \cite[Conjecture 7.5.1]{BZSV} for $X$ holds, 
    the loop space $LX$ is an $L^+G$--placid
    ind--scheme, and there exists a dimension theory for $LX$,  we give
    a spectral description
    of a 
    full subcategory of 
    Iwahori equivariant
    D-modules on $LX$
    in terms of the 
    relative Langlands 
    dual of $X$, 
    confirming a slight variant of the tamely ramified local relative Langlands conjecture proposed
    by Devalapurkar
    \cite[Conjecture 3.4.14.]{devalapurkar2024kutheoreticspectraldecompositionsspheres}. 
\end{abstract}
    
\maketitle

\tableofcontents 

\section{Introduction}

Sakellaridis-Venkatesh \cite{SV17} propose a framework of the relative Langlands program, in which the local harmonic analysis of spherical $G$-varieties is studied using Langlands parameters of the Langlands dual group $\check{G}$, 
together with the global 
analogue, and establish
relations between period integrals of automorphic forms
for such varieties with
$L$-functions of Galois representations. 
These conjectures were later 
formulated by 
Ben--Zvi--Sakellaridis-Venkatesh
\cite{BZSV} in terms 
of dualities between 
graded Hamiltonian $G$-spaces
$M$ and $\check{G}$-spaces 
$\check{M}$, known as
relative Langlands duality. 

In particular, the local harmonic analysis story admits 
a categorification for its unramified part \cite[Conjecture 7.5.1]{BZSV} in terms of 
this duality (see also
\autoref{section:unramified_equivalence}). Beyond the unramified case, some tamely ramified
relative local Langlands conjectures are formulated in \cite[Conjecture 3.4.14]{devalapurkar2024kutheoreticspectraldecompositionsspheres}, \cite[Conjecture 1.1.3]{finkelberg2025lagrangian}, and Travkin--Yang's upcoming work\footnote{We were informed by Ruotao Yang about an upcoming joint work with Roman Travkin on the Iwahori Gaiotto conjecture.}. The main result of our paper proves a slight variant of \cite[Conjecture 3.4.14]{devalapurkar2024kutheoreticspectraldecompositionsspheres} for a class of spherical $G$-varieties. 

Let us first briefly review the unramified relative 
local Langlands 
conjecture (see also \autoref{conj:relative_Langlands_unramified} for more details). Let $G$ be a complex reductive group and $\check{\mathfrak{g}}$ denote the Lie algebra of the Langlands dual group $\check{G}$. 
Let $X$ be a spherical $G$-variety
(see \autoref{def:polarized_hyperspherical}). 
Let $L^+G$, resp.\ $L^+X$, denote the positive loop {group}, resp.\ positive loop {space},
and let $LG$, resp.\ $LX$ denote the loop group resp.\ loop space. Denote by $\cD_c(-)$  the (small) stable $\infty$-category of \textit{coherent} $\cD$-modules, see \autoref{defn:coherent_D_mod}. The monoidal category $\cD_c(L^+G\backslash \Gr)$ acts on $\cD_c(L^+G\backslash LX)$ by convolution from the left, and we consider the full subcategory  $\cD_c(L^+G\backslash LX)^{\Sat}\subseteq \cD_c(L^+G\backslash LX)$ generated by the unramified basic object $\IC_0$, the $!$-extension of constant $\cD$-module
on $L^+G\backslash L^+X$, under this action, see \autoref{def:satake_subcategory}.
\begin{conjecture}[Unramified conjecture in the de-Rham setting]\label{Conj:intro}
 There is an equivalence of categories 
\[
\cD_c(L^+G\backslash LX)^{\Sat}\simeq \Perf(\sh^{1/2}(\check{M})/\check{G}),
\]
which satisfies the \textit{pointing condition}:  $\IC_0$ is sent to $\cO_{\sh^{1/2}(\check{M})}$; and is compatible with the action coming from Bezrukavnikov--Finkelberg's 
derived geometric Satake equivalence   \cite{bezrukavnikov2008equivariant}
\[
\cD_c(L^+G\backslash LG/L^+G)\simeq \Perf(\check{\mathfrak{g}}^*[2]/\check{G}).
\]
\end{conjecture}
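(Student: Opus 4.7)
The plan is to identify both sides as single-generator module categories over the derived Satake category $\Perf(\check{\mathfrak{g}}^*[2]/\check{G})$ and then match them via a Schwede--Shipley / Lurie recognition theorem.

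First, using the Bezrukavnikov--Finkelberg equivalence, I would view both sides as module categories over $\Perf(\check{\mathfrak{g}}^*[2]/\check{G})$: the left-hand side via convolution, the right-hand side via pullback along the moment map $\mu\colon \sh^{1/2}(\check{M})\to \check{\mathfrak{g}}^*[2]$. After passing to Ind-completions, both become presentable stable module categories over $\QCoh(\check{\mathfrak{g}}^*[2]/\check{G})$ with a distinguished compact generator: $\IC_0$ by the very definition of the Satake subcategory, and $\cO_{\sh^{1/2}(\check{M})}$ because $\mu$ is affine for an affine spherical $X$, so the pullback of the structure sheaf generates. A standard recognition theorem for compactly generated linear module categories then reduces the conjecture, including the pointing condition, to an isomorphism of $E_\infty$-algebra objects in $\QCoh(\check{\mathfrak{g}}^*[2]/\check{G})$
$$\End(\IC_0) \;\simeq\; \mu_* \cO_{\sh^{1/2}(\check{M})},$$
where $\End$ denotes the internal endomorphism object with respect to the convolution action.

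Computing the left-hand side is where the assumptions on $LX$ are essential: the $L^+G$-placid structure and the dimension theory allow one to stratify $LX$ by $L^+G$-orbits and produce a convergent spectral sequence for $\End(\IC_0)$. At the level of associated graded, the identification with $\mu_*\cO_{\sh^{1/2}(\check{M})}$ should follow from the Sakellaridis--Venkatesh Plancherel decomposition for $X$, which matches the classical spherical Hecke module of $X$ with $\check{G}$-equivariant functions on $\check{M}$; the polarized hyperspherical hypothesis then fixes the symplectic geometry on the spectral side.

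The main obstacle, as always, is to upgrade this graded-commutative identification to an equivalence of $E_\infty$-algebras. Higher coherence on $\End(\IC_0)$ reflects Ext-groups between $L^+G$-orbit closures in $LX$, and matching these with the derived structure of $\sh^{1/2}(\check{M})$ along $\mu$ requires a relative form of the Koszul duality argument underlying Bezrukavnikov--Finkelberg, now carried out equivariantly on $LX$ rather than on $LG$. Concretely, one needs to verify that the differentials in the orbit spectral sequence are governed by Kostant-section-type data coming from the symplectic structure of $\check{M}$ and that no unexpected cancellations occur. Once this is achieved, compatibility with the full $\Perf(\check{\mathfrak{g}}^*[2]/\check{G})$-module structure, beyond the underlying $\cO(\check{\mathfrak{g}}^*[2])^{\check{G}}$-algebra level, is built into the recognition theorem and requires no separate argument.
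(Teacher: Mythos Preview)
The statement you are attempting to prove is labeled \emph{Conjecture} in the paper, and the paper does not prove it. It appears as the ``Unramified conjecture in the de-Rham setting'' in the introduction and is restated later as the unramified relative local Langlands conjecture; the paper then explicitly takes its validity as a standing hypothesis (part (c) of the standing Assumption) in order to deduce the Iwahori-level statement, which is the paper's actual main theorem. So there is no ``paper's own proof'' to compare against: the unramified equivalence is input, not output.

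Your outline is nonetheless a reasonable sketch of how the \emph{known} cases of the conjecture have been established (the paper surveys these: Bezrukavnikov--Finkelberg for the group case, Braverman--Finkelberg--Ginzburg--Travkin for Godement--Jacquet, and several further homogeneous examples). The reduction to an isomorphism of internal endomorphism objects via a Barr--Beck/Schwede--Shipley recognition argument is the standard template, and computing $\End(\IC_0)$ through an orbit filtration is indeed what those references do. But you have correctly put your finger on the genuine obstruction: upgrading the graded identification to an $E_\infty$ (or even $E_1$) equivalence is the hard, case-specific step, typically carried out via explicit Koszul-type computations tailored to the particular $X$. No uniform argument is presently known, which is exactly why the statement remains a conjecture and why the paper under review assumes it rather than proving it. Your proposal is therefore not wrong as a strategy, but it is a research programme rather than a proof, and it does not correspond to anything the paper itself carries out.
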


The shearing functor $\sh^{1/2}$ is defined in \autoref{sec:shear}. We discuss the known cases of this conjecture in \autoref{section:unramified_equivalence}. 


We assume that our
smooth affine spherical
$G$ variety $X$
satisfies the following standing assumptions:
\begin{assu} \label{assum:intro} ${}$ 
\begin{itemize}
        \item[(a)] the colimit of $L^+G$-orbit closures in $LX$ gives rise to a presentation of $LX$ as an $L^+G$-placid ind-scheme,
        \item[(b)] $LX$ admits a dimension theory in the sense of \cite{raskin_dmodules_infinite_var},
        \item[(c)] the unramified local relative Langlands
        \autoref{Conj:intro}
        holds.
    \end{itemize}    
\end{assu}

Let $I\subset L^+G$
be an Iwahori subgroup and $\check{\mathfrak{n}}$ be the Lie algebra of the nilpotent radical of $\check{B}$. Let $\tilde{\check{\mathfrak{g}}}^*(2):= \check{G}\times^{\check{B}} 
\check{\mathfrak{n}}^{\perp}(2)= T^*(2)(\check{G}/\check{N})/\check{T}$ 
be the Grothendieck--Springer resolution, 
with the $\GG_m$-action given by weight $2$ on $\check{\mathfrak{n}}^{\perp}$ see \autoref{def:cotangent}. We define the Iwahori level Satake category $\cD_c(I\backslash LX)^{\Sat}$ in  \autoref{def:satake_subcategory} 
   to be the full subcategory of $\cD_c(I\backslash LX)$ generated by $\mathrm{IC}_0\in \cD(L^+G\backslash LX)$ under the left action 
   of $\cD_c(I\backslash LG/L^+G)$. 

\begin{thm} [\autoref{thm:iwahori_proof}]
    Under \autoref{assum:intro}, there is an equivalence
    \begin{equation}\label{equation:main_thm}
        \LL^{\rm Sat}: \cD_{{c}}(I\backslash LX)^\Sat\simeq \mathrm{Perf}(\mathrm{sh}^{1/2}(\tilde{\check{\mathfrak{g}}}^*(2)\times_{\check{\mathfrak{g}}^*(2)}\check{M})/\check{G}),
    \end{equation}
    which is equivariant with respect to

    \begin{equation}\label{equation:algebra}
        \mathrm{End}_{\cD_c(L^+G\backslash LG/L^+G)}(\cD_c(I\backslash LG/L^+G))\simeq \mathrm{End}_{\Perf(\check{\mathfrak{g}}^*[2]/\check{G})}(\Perf(\tilde{\check{\mathfrak{g}}}^\ast[2]/\check{G})).
    \end{equation}
\end{thm}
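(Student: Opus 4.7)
The plan is to bootstrap the Iwahori-level equivalence from the unramified equivalence (Assumption~(c)) by an induction formula along the spherical Hecke action, and then to compare the resulting spectral tensor product with $\Perf$ of the claimed fiber product.

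First I would establish the convolution/induction identification
\[
\cD_c(I\backslash LX)^{\Sat} \simeq \cD_c(I\backslash LG/L^+G)\otimes_{\cD_c(L^+G\backslash LG/L^+G)}\cD_c(L^+G\backslash LX)^{\Sat},
\]
realizing the Iwahori Satake subcategory as the induction of the spherical one along the $\cD_c(I\backslash LG/L^+G)$-action. Essential surjectivity is immediate from the definition of the left-hand side as generated by $\IC_0$ under this convolution. Fully-faithfulness amounts to a Barr--Beck-type descent for the fibration $L^+G/I$, where Assumptions~(a) and (b) are essential for making the convolution and $\Hom$ formalism rigorous on the placid ind-scheme $LX$ in Raskin's framework.

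Second, I would feed two spectral inputs into the right-hand side. Assumption~(c) gives
\[
\cD_c(L^+G\backslash LX)^{\Sat}\simeq\Perf(\sh^{1/2}(\check{M})/\check{G})
\]
as a module over the derived geometric Satake $\cD_c(L^+G\backslash LG/L^+G)\simeq\Perf(\check{\mathfrak{g}}^{*}[2]/\check{G})$, and the Iwahori--spherical Bezrukavnikov equivalence (derivable from the full Iwahori--Iwahori equivalence by taking $L^+G$-invariants) identifies $\cD_c(I\backslash LG/L^+G)\simeq\Perf(\tilde{\check{\mathfrak{g}}}^{*}[2]/\check{G})$, compatibly with the Springer map $\tilde{\check{\mathfrak{g}}}^{*}\to\check{\mathfrak{g}}^{*}$. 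Substituting and applying the Ben-Zvi--Francis--Nadler tensor product formula $\Perf(Y_1)\otimes_{\Perf(Z)}\Perf(Y_2)\simeq\Perf(Y_1\times_Z Y_2)$ for perfect complexes on quotient stacks of reasonable schemes by reductive groups, one obtains $\Perf((\tilde{\check{\mathfrak{g}}}^{*}[2]\times_{\check{\mathfrak{g}}^{*}[2]}\sh^{1/2}(\check{M}))/\check{G})$. Since $\sh^{1/2}$ is a universal rescaling of the $\GG_m$-grading it commutes with formation of fiber products, and matching the cohomological $[2]$-shift with the weight-$(2)$ grading on $\check{\mathfrak{n}}^{\perp}$ used to build the Grothendieck--Springer resolution then yields the target $\Perf(\sh^{1/2}(\tilde{\check{\mathfrak{g}}}^{*}(2)\times_{\check{\mathfrak{g}}^{*}(2)}\check{M})/\check{G})$. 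The equivariance \eqref{equation:algebra} is automatic because the construction is, by design, a tensor-product functor over the spherical Hecke side.

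The main obstacle will be the first step: verifying the induction/Barr--Beck formula in the placid ind-scheme setting. One has to show that the forgetful functor $\cD_c(L^+G\backslash LX)^{\Sat}\to\cD_c(I\backslash LX)^{\Sat}$ is comonadic (or its left adjoint is monadic) with the expected monad, which is delicate because it must be set up compatibly with the $L^+G$-placid presentation of $LX$ and the chosen dimension theory, and because the Satake subcategories on both sides are not a priori cocomplete, so one must work with ind-completions and then restrict. A secondary subtlety is checking that the $\Perf$-to-fiber-product identification genuinely commutes with $\sh^{1/2}$ and the cohomological shift $[2]$; this should follow from universal properties of shearing but needs to be spelled out.
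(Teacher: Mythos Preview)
Your proposal is correct and follows essentially the same route as the paper: establish the induction formula $\cD_c(I\backslash LX)^{\Sat}\simeq\cD_c(I\backslash\Gr)\otimes_{\cH_{c,K}}\cD_c(K\backslash LX)^{\Sat}$, then plug in ABG, derived Satake, Assumption~(c), and Ben-Zvi--Francis--Nadler on the spectral side, with shearing pulled through the fiber product. The paper's implementation of the first step is exactly the ind-complete-then-restrict maneuver you anticipate---it invokes Campbell--Dhillon's full faithfulness for the large categories and then proves that the small relative tensor product embeds fully faithfully into the large one via an explicit monad comparison and a K\"unneth formula for coherent $\cD$-modules on placid ind-schemes; one cosmetic difference is that the paper cites the Arkhipov--Bezrukavnikov--Ginzburg equivalence directly rather than deducing it from Bezrukavnikov's two realizations.
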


 We make the following remarks on the relation between our result with \cite[Conjecture 3.4.14]{devalapurkar2024kutheoreticspectraldecompositionsspheres}.The equivalence (\ref{equation:main_thm}) is conjectured  in \textit{loc. cit}, which gives a spectral description of the Satake full subcategory $\cD_c(I\backslash LX)^\Sat\subseteq\cD_c(I\backslash LX)$. It is unclear how to formulate the conjecture for the entire category $\cD_c(I\backslash LX)$ for general $X$, and it would be an interesting question to study such a formulation. 
 
 In \textit{loc. cit}, equivalence (\ref{equation:main_thm}) is expected to be equivariant with respect to the actions that come from Bezrukavnikov's equivalence \cite{bezrukavnikov2016two}. Our equivalence (\ref{equation:algebra}) may be interpreted as the "self-tensor product" of the Arkhipov--Bezrukavnikov--Ginzburg's equivalence \cite{arkhipov2004quantum,gaitsgory-semiinfinite-intersection-coh} over the derived geometric Satake equivalence of Bezrukavnikov-Finkelberg \cite{bezrukavnikov2008equivariant}. We do not prove the actions of \ref{equation:algebra} are compatible with those of  Bezrukavnikov's equivalence.
 
With the machinery developed in \cite{zhu2025tamecategoricallocallanglands}, we expect that our strategy can be applied to establish our main theorem in the  $\ell$-adic setting\footnote{We thank Xinwen Zhu for helpful discussions.}.

\subsection{Strategy of the proof of 
the main theorem}
Our proof is motivated by the following idea in categorical representation theory. Let $K\subset H$ be  affine group schemes. For a DG category $\cC$ admitting an action of $\cD(LH)$, its $K$-invariants 
$$
\cC^K:=\mathrm{Hom}_{\cD(K)}(\Vect,\cC)
$$
naturally carries a right action of 
$\cH:=\mathrm{End}_{\cD(H)}(\cD(H)_K)$. If the prounipotent radical of $K$ has finite codimension, we have identification $\cH\simeq \cD(K\backslash
 H/K)$ and the functor of taking $K$-invariants admits a left adjoint which sends any  $\cC$ admitting a right $\cD(H)$-action to $\cC\otimes_{\cH}\cD(H)_K$.

If in addition $H/K$ is ind-proper, an important observation of Campbell--Dhillon \cite[Theorem 3.1.5]{campbell2021affine} allows us to reconstruct a full subcategory of $\cC$ from its $K$-invariants. More precisely, the natural action functor
$$
\cD(H/K)\otimes_{\cH} \cC^K\rightarrow \cC
$$
is a fully faithful functor of $\cD(H)$-modules. 

Let $\Gr$ resp. $\Fl$ denote the affine Grassmannian resp. affine flag variety of $G$.   Adapting Campbell--Dhillon's theorem to our setting yields a fully faithful functor 
\[  F:\cD(I\backslash\Gr)\otimes_{\cD(L^+G\backslash\Gr)}\cD(L^+G\backslash LX)\rightarrow D(I\backslash LX),
\]
which induces a functor
$$
\iota:\cD_c(I\backslash\Gr)\otimes_{\cD_c(L^+G\backslash\Gr)}\cD_c(L^+G\backslash LX) \rightarrow\cD(I\backslash\Gr)\otimes_{\cD(L^+G\backslash\Gr)}\cD(L^+G\backslash LX).
$$
Establishing the  full faithfulness of $\iota$  is the central step in the proof of our main theorem which we discuss in \autoref{section:construction_of_the_functor}.   We recall that the left hand side of $\iota$ is  the relative tensor product of \textit{small} categories which brings technical difficulties to work with. We prove the desired full faithfulness of $\iota$ by a calculation of monads in \ \autoref{prop:monad_map_space} and a K\"{u}nneth type formula in \autoref{lem:kunneth_property}. With the embedding $\iota$, we derive an equivalence 
$$
F^\Sat_{c}:    \cD_c (I\backslash \Gr)\otimes_{\cH_{c,K}}\cD_c(K\backslash LX)^{\rm Sat}\simeq \cD_c (I\backslash LX)^\mathrm{Sat},
$$
of categories by considering generators of the spherical and Iwahori Satake subcategories in \autoref{thm:F_cstr_Sat}. 

To pass to the spectral side, we make crucial use of the integral transform of Ben--Zvi--Francis--Nadler \cite{ben2010integral}.
An integral transform is the categorical avatar of integration against a kernel:
a correspondence $Z \subset X\times_S Y$ equipped with a kernel $\cK$ encodes a
push–pull functor on the derived categories of bounded complexes of coherent sheaves $\Phi_\cK:\;\mathrm{Coh}^b(X)\to \mathrm{Coh}^b(Y)$,  $\Phi_\cK(\cF)=p_{Y*}\!\bigl(\cK\otimes p_X^*\cF\bigr)$, where $p_X$ and $p_Y$ are the natural projections.
This “correspondences act by kernels” principle underlies Fourier–Mukai theory and the
Hecke/Whittaker kernels that implement passage between automorphic and spectral categories, see \cite{nadler2019spectral,gaitsgory2021localglobalversionswhittaker}.

For perfect derived stacks $X,Y$ over a base $S$,  Ben--Zvi--Francis--Nadler \cite{ben2010integral} establishes canonical equivalences
$$
  \QCoh(X\times_S Y)\simeq
  \QCoh(X)\otimes_{\QCoh(S)}\QCoh(Y)\simeq
  \Fun_{\QCoh(S)}\!\bigl(\QCoh(X),\QCoh(Y)\bigr),
$$
which realize continuous \(\mathrm{QCoh}(S)\)-linear functors as integral transforms with kernels in
\(\QCoh(X\times_S Y)\). Passing to compact objects, there is an identification
\begin{equation}\label{eq:BZFN-kernel}
    \Perf(X\times_S Y)\simeq
  \Perf(X)\otimes_{\Perf(S)}\Perf(Y).
\end{equation}

With the Arkhipov--Bezrukavnikov--Ginzburg's equivalence \cite{arkhipov2004quantum,gaitsgory-semiinfinite-intersection-coh}, derived geometric Satake equivalence \cite{bezrukavnikov2008equivariant}, and the unramified local geometric Langlands conjecture for $X$ (see \ \autoref{Conj:intro}), we identify the left hand side of $F^\Sat_c$ with
$$\mathrm{Perf}(\sh^{1/2}(\tilde{\check{\mathfrak{g}}}(2)\times_{\check{\mathfrak{g}}^*(2)}\check{M})
/\check{G})\simeq \mathrm{Perf}(\tilde{\check{\mathfrak{g}}}^*[2]/\check{G})\otimes_{\mathrm{Perf}(\check{\mathfrak{g}}^*[2]/\check{G})} \mathrm{Perf} (\mathrm{sh}^{1/2}(\check{M})/\check{G}).$$
The exact same argument establishes (\ref{equation:algebra}) and it follows from our construction that (\ref{equation:main_thm}) is compatible with the action of (\ref{equation:algebra}) which completes the proof of our main theorem. 

\subsection{Organization.}  
In \Cref{section:conventions}, we recall some preliminaries for this paper: the sheafification functor, higher algebra, and the theory of $\cD$-modules. In \Cref{section:categorical_actions}, we recall the material from the categorical actions literature that we require. In \Cref{section:unramified_equivalence}, we recall the unramified local relative duality and state the known cases. \Cref{section:construction_of_the_functor} is the technical core of the paper. We construct a fully faithful functor $F_c^\Sat$ from the automorphic side to the spectral side in \Cref{thm:CD_counit}. In \Cref{section:proof_of_the_main_theorem}, we complete the proof of our main theorem \Cref{thm:iwahori_proof}.

\subsection{Notations} 
\label{subsec:notations}

We introduce the following notations, which will be used throughout the paper, unless otherwise specified.

We let $k$ denote an algebraically closed field of characteristic $0$. 

Fix $G$ a connected reductive group over $k$, with a chosen Borel subgroup $B$, and a maximal torus $T \subset B$. Let $I$ be the Iwahori subgroup corresponding to $B$. We denote by  $\mathbb{X}^*(T)$, resp. $\mathbb{X}_*(T)$, the character, resp. cocharacter, lattice. We let $W:=N_G(T)/T$ denote the finite Weyl group. 

Let $\check{G}$ denote the Langlands dual group of $G$ and $\check{\mathfrak{g}}$ the Lie algebra of $\check{G}$.

For a finite type scheme $X$ over $k$, let $L^+X$ denote the positive loop space and $LX$ the loop space. We write $K=L^+G$ for simplicity.  Let $\Gr$ resp. $\Fl$ denote the affine Grassmannian resp. affine flag variety of $G$.
    
We denote by $\cD(-)$ the DG-category of $\cD$-modules,  $\cD_c(-)$ the small full subcategory of coherent $\cD$-modules, and $\cD^{\text{ren}}(-):=\mathrm{Ind}(\cD_c(-))$  the renormalized  category of $\cD$-modules.

    We denote by $\cH_I:=\cD(I\backslash\LG/I)$ the affine Hecke category,   $\cH_{c,I}$  the (small) affine Hecke category of coherent $\cD$-modules, and $\cH_I^\ren$ the renormalized affine Hecke category. Let $\cH_K:=\cD(L^+G\backslash\Gr)$ be the spherical Hecke category,  $\cH_{c,K}$ the small spherical Hecke category of $\cD$-modules, and $\cH_K^\ren$ the renormalized spherical Hecke category.

\section{Acknowledgments}
This project originated during the workshop Advances in Representation Theory at Northeastern University in June 2025. We thank the organizers for their hospitality and support. We thank David Ben--Zvi for many helpful discussions related \cite{BZSV} and Lingfei Yi for  explaining \cite{chenyi_2025singularitiesorbitclosuresloop} to us. We are especially grateful to Justin Campbell and Xinwen Zhu for helpful discussions and comments, and to Yiannis Sakellaridis for sustained interest and encouragement. We thank Sanath K. Devalapurkar, Taeuk Nam, and David Yang for helpful discussions. We also thank Ruotao Yang and Roman Travkin for communicating their  ongoing work with us and  useful discussions.

\section{Conventions}\label{section:conventions}

Throughout this paper, we will substantially use the language of $\infty$-categories, as developed in \cite{HTT,HA}. Let $\Ani$ denote the $\infty$-category of $\infty$-groupoids, or \textit{anima}. Given an $\infty$-category $\mathcal{C}$, and a pair of objects $c_1,c_2\in\mathcal{C}$, we let 
$$
\mathrm{Map}_{\mathcal{C}}(c_1,c_2)\in\Ani
$$
be the mapping
anima between them.  
Let $\widehat{\Cat}_\infty$ be the category of all
(not necessarily small) categories. We will frequently use the following subcategories of $\widehat{\Cat}_\infty$. Let us first describe the "large" setting. 

 Let $\smbr{\Pr^L,\otimes,\Ani}$ denote the subcategory of (not necessarily small) categories $\widehat{\Cat}_\infty$,  spanned by presentable $\infty$-categories and colimit preserving functors, with symmetric monoidal structure, as in \ \cite[Section 4.8.1]{HA}, denoted $\otimes$, and unit $\Ani$ \ \cite{HTT}. 

Let $\Pr^L_{\st} \hra \Pr^L$ be the subcategory of presentable stable categories with colimit preserving 
functors, which is \textit{lax} monoidal, cf.\ \cite[Proposition 4.8.2.18]{HA}, which we again abusively denote $\otimes$. 

Let $R$ be an $\EE_\infty$ ring spectrum then $\LMod_R \in \CAlg(\Pr^L)$, allowing us consider left module objects:  \[ \Lin\Cat_R:=\LMod_{\LMod_R}(\Pr^L) \] 
is the $\infty$-category of $R$-linear categories. Following conventions of \cite{GR17vol1} (see also \cite{cohn2016differentialgradedcategoriesklinear}), we denote 
$$
\mathrm{DGCat}:=\Lin\Cat_k,
$$ whose unit object is 
$
\Vect:=\LMod_k.$ Let $\cC\in\mathrm{DGCat}$ and $c_1,c_2\in \mathrm{DGCat}$. We use 
$$
\underline{\mathrm{Map}}_{\cC}(c_1,c_2)
$$
to denote the  the $\Vect$-enriched mapping space. The $\infty$-groupoid $\mathrm{Map}_\cC(c_1,c_2)$ is obtained from $\underline{\mathrm{Map}}_\cC(c_1,c_2)$ as the underlying anima. 

\subsection{Shearing}
\label{sec:shear}
In this section, we recall the shearing functor from \cite{devalapurkar2024kutheoreticspectraldecompositionsspheres, BZSV}. We let $k$ denote our base field. 

Throughout the paper, we adopt cohomological degrees,
i.e. if $M\in \Mod_k$,
viewed as a chain complex whose 
$i$-th degree is denoted  $M^i$, then 
$(M[n])^i= M^{i+n}$. For 
example, $k[n]$ denotes a complex
with $k$ sitting in 
degree $-n$. 

For $(\cC,\otimes,1)$ a symmetric monoidal category, we denote $\cC^{\gr}:=\Fun(\ZZ_\ds, \cC)$ the category of graded objects in $\cC$, where $\ZZ_{\ds}$ is $\ZZ$ regarded as a discrete symmetric monoidal category via its addition. $\cC^{\gr}$  equipped with Day convolution as a monoidal structure, \cite[Cor 4.8.1.12]{HA}. In particular, for two objects $(X_\bullet), (Y_\bullet) \in \cC^{\gr}$, we have 

\[ 
(X_\bullet \otimes Y_\bullet)_{n \in \ZZ} = \bigsqcup_{i+j=n} X_i \otimes Y_j 
\]
\begin{definition} \cite[Construction 2.1.1]{devalapurkar2024kutheoreticspectraldecompositionsspheres}  \label{def:shearing}
Denote \textit{$\frac{1}{2}$-shearing} 
to be the $\EE_1$ monoidal equivalence as   
\begin{align*}
\sh^{1/2}: 
\Mod_k^{\gr} &\ra \Mod_k^{\gr} 
\end{align*}
\noi characterized by
\[ 
M = \bigoplus_{w \in \ZZ} M_w   \mapsto 
\sh^{1/2}(M) := \bigoplus_{w \in \ZZ} M_w [w]  
\]
where $M_w$ is the  weight $w$
part of $M$. 
\end{definition}

\begin{p} \cite[Remark 2.1.9]{devalapurkar2024kutheoreticspectraldecompositionsspheres}\label{prop:shearing_is_symmetric_monoidal} The shearing functor $\sh^{1/2}$ restricted to even graded modules is symmetric monoidal functor
    \[ 
    \sh^{1/2}:\Mod_{\ZZ, \even}^{\gr} \ra \Mod_{\ZZ, \even}^{\gr} 
\]
\end{p} 

\noi Thus, when we refer graded $\EE_\infty$ connective dg-$k$-algebras $A$,  concenrated in even cohomological degrees, $\sh^{1/2}(A)$ is again an $\EE_\infty$ connective dg-$k$-algebra. 
Another way to upgrade to symmetric monoidal equivalence is to work with supervector spaces, see \cite[Chapter 6]{BZSV}. 

\begin{Example}
\label{exam:shear_vector_space}
    If $M=\bigoplus_{i \in \ZZ} M_i \in \Mod^{\gr,\heart}_k$, be a discrete graded module, 
    where $M_i\in \Mod_k^{\heart}$
    is weight $i$ component of $M$, 
    then $M_i[i]$ lives in degree $-i$
    and weight $i$. 
\end{Example}

\begin{definition} 
\label{def:weight}
    Let $V \in \Mod_k^{}$. We denote $V(n)\in \Mod^{\gr}_k$
    be the graded module of $V$ where $\GG_m$ acts via
    weight $n$, i.e. $t \cdot v = t^n v$ for $t \in \GG_m, v \in V$. 
\end{definition}

\noi Now let us briefly recall the formal construction for symmetric algebra. 

Let $(\mathcal C,\otimes,1)$ be a presentable stable unital symmetric monoidal category where $\otimes$ preserves colimits separately in each variable. Let $\CAlg(\cC)$ denote the $\infty$-category of commutative algebra objects in $\cC$, and $U_\cC: \CAlg(\cC)\rightarrow \cC$ be the forgetful functor. We have an adjunction 
\[ 
\begin{tikzcd}
  \CAlg(\cC) \ar[d, shift left=1ex, bend left, "U_{\cC}"] \\
  \cC \ar[u,bend left, shift left=1ex, "\Sym^\bullet_{\cC}"],
\end{tikzcd}
\]
\noi which is given by the following explicit formula 
\[
\Sym^\bullet(M) \simeq \bigoplus_{n\ge 0}\,\Sym^n(M),
\qquad
\Sym^n(M)\;\simeq\big(M^{\otimes_{\cC} n}\big)_{\Sigma_n} 
\]
where $(-)_{\Sigma_n}$ denotes homotopy orbits for the permutation action of $\Sigma_n$. We will simply denote $\Sym^\bullet:=\Sym^\bullet_{\cC}$ when the context is clear: we either work with $\cC=\Mod^\gr_k$, or more generally, $\cC=\QCoh(X/\GG_m)\simeq \QCoh(X)^{\gr}$ for a scheme $X$.

We now globalize the construction for a $\cO_X$ -modules for a scheme $X$. 
For a locally free sheaf $\mathcal{F}$ over $X\in \Sch_k$
and $n \in \mathbb{Z}$, we denote $\mathcal{F}(n)$ 
to be the graded
locally free sheaf over $X$ where $\GG_m$ acts by 
weight $n$ on the fibers. We denote 
$\mathcal{F}[n]$ (or $\sh^{1/2}\mathcal{F}(n)$) to be
the corresponding graded locally free sheaf 
where we shear fiberwise of $\mathcal{F}(n)$ by 
$\sh^{1/2}$. 
\begin{definition}\label{def:cotangent}
For a smooth scheme $X$ and $j\in \mathbb{Z}$, we denote
$\mathcal{T}_X(j)$ to be the tangent sheaf of $X$, 
$T^*(j)X$ to be the cotangent bundle of $X$, where 
$\GG_m$ acts on the stalks/fibers by weight $j$. 
\end{definition}
\begin{definition}[Shifted vector bundles] \label{def:shifted_vector_bundles}
Let $V$ be a finite-dimensional $k$-module. Let $(-)^*$ denote the $k$-linear dual. 
We define $V[n]:=\sh^{1/2}V(n)$ to be the 
derived affine $k$-scheme whose coordinate rings  is 
\[
\sh^{1/2}\left(\Sym^\bullet V(n)^*\right)=
\bigoplus_{j\ge 0} \sh^{1/2}\Sym^j(V^*)(-nj)=
\bigoplus_{j\ge 0} \Sym^j(V^*)[-nj]. \] 

Similarly, for a scheme $X$ and 
$n\in \mathbb{Z}$, we defined 
$T^*[n]X$ (see also \cite[Definition 1.20]{pantev2013shiftedsymplecticstructures}) to be the relative spectrum $\underline{\Spec}_X$ (cf. \cite[2.5.1.3]{SAG} for the spectral analogue for definition of relative 
spectrum) of 
\[
\sh^{1/2}\left(\Sym^\bullet  \mathcal{T}_X(-n)\right)
=\bigoplus_{j\ge 0} \sh^{1/2}\Sym^j
\mathcal{T}_X(-nj)
= \bigoplus_{j\ge 0} \Sym^j\mathcal{T}_X[-nj]. 
\]
\end{definition}

\subsection{Compact generation} 
\label{compact_generation} 
As we will be passing between large and small categories, we briefly recall the machinery needed in the below. 

\begin{definition}
\label{def:generation}
Let $\mathcal{C}\in 
\widehat{\Cat}_\infty$.
Let $\{c_{\alpha}\}$ be a collection 
of objects in $\mathcal{C}$, denote 
$\langle c_{\alpha}\rangle$ the 
smallest cocomplete stable subcategory of 
$\mathcal{C}$ containing all $c_{\alpha}$.
Equivalently, by \cite[Proposition 5.4.5]{GR17vol1},
we have 
\[
\Map_{\cC}(c_{\alpha}[-i],
c)=0, \; \forall i\ge 0
 \implies c=0.
\]
\end{definition}

\begin{de} \label{def:compact_objects}
    We call an object $c\in\mathcal{C}$ \textit{compact} if the Yoneda functor 
    $$
    \mathrm{Maps}_\mathcal{C}(c,-):\cC \ra \Ani
    $$
    preserves filtered colimits. We denote $\mathcal{C}^\omega\subset\mathcal{C}$ be full subcategory spanned by compact objects. 
\end{de}

\begin{de}\label{def:compactly_generated_category} Let $\cC \in \widehat{\Cat}_\infty$. Then $\cC$ is \textit{compactly} generated if it satisfies the following equivalent conditions: 
\begin{enumerate}
    \item $\cC$ admits small filtered colimits and every object can be realized as a colimit of small filtered diagram $\crbr{c_\alpha}$ where $c_\alpha \in \cC$ is a compact object. 
    \item The natural functor $\Ind(\cC^\omega) \ra \cC$ is an equivalence, where $\Ind(\cD)$  for a small category $\cD$ is the \textit{ind-completion} of $\cD$ characterized by the universal property of admitting a map $\cD \ra \Ind(\cD)$, such that $\Ind(\cD)$ admits small filtered colimit, and precomposition induces an equivalence 
    \[ 
    \Fun^{\omega}(\Ind(\cD),\cE) \xra{\simeq} \Fun(\cD,\cE) 
    \]
    \noi where $\Fun^\omega(\Ind(\cC),\cD)$ is the full subcategory of functors commuting with small filtered colimits, see also \cite[\S 5.3]{HTT}.  
\end{enumerate}
\end{de}

\begin{de}
    \label{def:stable_categories}
    Let $\Pr^L_{\st, \omega} \hra \Pr^L$ denote the full subcategory of compactly generated stable presentable categories.
\end{de}
\begin{p} \label{p:compact_generation_stable_case}
    Let $\cC$ be a stable category admitting all small colimits. Then the following are equivalent: 
    \begin{enumerate}
        \item $\cC$ is compactly generated. 
        \item $\cC$ admits a small set of compact object $S \hra \cC^\omega$ such that for 
        \[ 
        \Map(s[-i],c) \simeq 0 \quad \forall i \ge 0 , s \in S. 
        \]
    \end{enumerate}
\end{p}

\begin{proof}
    This is \cite[Lem 7.2.3(4)]{GR17vol1} and universal property of stabilization, \cite[\S 1.4.4.5]{HA}, where we can identify 
    $$
    \Ind(\cC_0)\simeq \Fun^\ex(\cC_0^\op, \Sp)
    $$ 
    for a small stable category $\cC_0$, where  $\Fun^\ex(-,-)$ denotes the full subcategory of $\Fun(-,-)$ spanned by exact functors. 
\end{proof}

\begin{definition}\label{def:small_relative_tensor} ${}$  
    \begin{enumerate}
    \item Let $\Cat^{\perf}\hra \Cat$ denote the subcategory small idempotent complete stable $\infty$-categories with right exact functor. 
  \item  The category $\Cat^\perf$ admits a symmetric monoidal structure given by the tensor product $\otimes$  which is defined as 
     \[ 
     \cA \otimes \cB  \simeq \smbr{ \Ind \cA \otimes \Ind \cB }^{\omega},  \quad \cA, \cB \in \Cat^\perf,
     \]
     which gives rise to symmetric monoidal adjunction equivalence  
    \[ 
    \begin{tikzcd}
        \smbr{\Cat^\perf,\otimes}  \ar[r, shift left=1ex, "\Ind"] & \Pr^{L}_{\st, \omega} \ar[l, shift left=1ex, "{(-)^\omega}" ].
    \end{tikzcd}
    \]
    \item  We define the relative tensor product in $\Cat^\perf$ as 
     \noi 
     \[
     \cM \otimes_{\cA} \cN := \smbr{\Ind \cM \otimes_{\Ind \cA} \Ind \cN}^\omega.
     \]
     \item Let $R$ be an $\EE_\infty$ ring spectrum. Then $\Perf(R) \in \CAlg(\Cat^\perf)$, and we define \[ \Cat^\perf_R:=\LMod_{\Perf(R)}\smbr{\Cat^\perf} \] 
     \noi of small $R$-linear idempotent complete stable categories\footnote{Note that the functor $\Cat^\perf_R \ra \Cat^\perf$ preserves finite limits and colimits, however the monoidal structures are not the same. }.
\end{enumerate}
\end{definition}

\subsection{Module categories}
We adopt the definitions from \cite{HA}: a monoidal category is an algebra object in $\mathrm{DGCat}$. For a given monoidal category $\cA$, we have the corresponding notions of left and right module categories (cf. \ \cite[Sections 4.2, 4.3]{HA}). We write $\mathrm{LMod}_\cA$ (resp. $\mathrm{RMod}_\mathcal{A}$) the category of left (resp. right) $\mathcal{A}$-module objects in $\mathrm{DGCat}$. There is a canonical identification $$
\mathrm{LMod}_\cA\simeq \mathrm{RMod}_{\mathcal{A}^\mathrm{rev}},
$$
where $\mathcal{A}^{\mathrm{rev}}$ is the monoidal category $\mathcal{A}$ with reversed multiplication c.f.\cite[Chapter 1, 3.1.4]{GR17vol1}. For monoidal categories $\mathcal{A}$ and $\mathcal{B}$, we denote by $_{\mathcal{A}}\mathrm{BMod}_\mathcal{B}$ the category of ($\mathcal{A},\mathcal{B}$)-bimodules. There is a similar identification 
$$
_{\mathcal{A}}\mathrm{BMod}_\mathcal{B}\simeq \mathrm{LMod}_{\mathcal{A}\otimes \mathcal{B}^{\mathrm{rev}}}.
$$
Let $\mathcal{M}\in {_{\mathcal{A}}}\mathrm{BMod}_\mathcal{B}$,
there is a functor
\begin{equation}
    \mathcal{M}\otimes_\mathcal{B}(\bullet):\mathrm{LMod}_{\mathcal{B}}\rightarrow \mathrm{LMod}_{\mathcal{A}}
\end{equation}
given by Lurie's relative tensor product cf. \cite[Section 4.8.1]{HA}.

\subsection{Relative compactness}
Let $\cA$ be a monoidal category. 
\begin{de}(\cite[Def. 8.8.2]{GR17vol1}) \label{def:relative_compact}
    An object $m\in \cM$ is called \textit{compact relative to $\cA$} if 
$$
\underline{\mathrm{Map}}_\cA(m,-):\cM\rightarrow \cA
$$
commutes with filtered colimits 
\end{de}
\begin{lemma}(\cite[Lem. 9.3.4]{GR17vol1})\label{lemma:rigid}
If $\cA$ is a rigid monoidal category, $m$ is compact if and only if $m$ is compact relative to $\cA$.   
\end{lemma}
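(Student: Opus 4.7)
The plan is to prove both directions by translating between $\mathrm{Map}_\cM$ and $\underline{\mathrm{Map}}_\cA$ using the defining adjunction of enriched Hom, and then deploying the two key features of rigidity at separate points. Recall that for $a\in \cA$ and $m,n\in \cM$,
$$
\mathrm{Map}_\cA(a,\underline{\mathrm{Map}}_\cA(m,n))\simeq \mathrm{Map}_\cM(a\otimes m, n),
$$
so taking $a=\mathbf{1}_\cA$ gives $\mathrm{Map}_\cM(m,n)\simeq \mathrm{Map}_\cA(\mathbf{1}_\cA,\underline{\mathrm{Map}}_\cA(m,n))$.

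For the direction \emph{compact relative to $\cA$ $\Rightarrow$ compact}, I would use that in a rigid monoidal category the unit $\mathbf{1}_\cA$ is compact. Then $\mathrm{Map}_\cA(\mathbf{1}_\cA,-)$ preserves filtered colimits in $\cA$. Composing this with the assumed filtered-colimit-preserving functor $\underline{\mathrm{Map}}_\cA(m,-)\colon \cM\to\cA$ and identifying the composite with $\mathrm{Map}_\cM(m,-)$ via the displayed equivalence yields compactness of $m$ in $\cM$.

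For the converse, suppose $m$ is compact in $\cM$. To show $\underline{\mathrm{Map}}_\cA(m,-)$ preserves filtered colimits, it suffices to check this after composing with $\mathrm{Map}_\cA(a,-)$ for $a$ ranging over a generating set of compact objects of $\cA$, since $\cA$ is compactly generated (another ingredient of rigidity) and a map in $\cA$ is an equivalence iff it induces equivalences on $\mathrm{Map}_\cA(a,-)$ for compact $a$. By the adjunction above this composite is $\mathrm{Map}_\cM(a\otimes m,-)$, so the whole statement reduces to showing that $a\otimes m$ is compact in $\cM$ for every compact $a\in \cA$. Here the key input from rigidity is that compactness coincides with dualizability in $\cA$: for compact $a$, the functor $a\otimes(-)\colon \cM\to \cM$ has right adjoint $a^\vee\otimes(-)$, which is itself a left adjoint (with right adjoint $a\otimes(-)$), hence colimit-preserving. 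Therefore $a\otimes(-)$ preserves compact objects, and $a\otimes m$ is compact.

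The main obstacle, such as it is, lies in pinning down which packaging of the rigid axioms is invoked where: the forward direction needs only compactness of the unit, while the converse direction needs both compact generation of $\cA$ and the identification of compact and dualizable objects. Once these are isolated, the argument is an essentially formal manipulation of the enriched-Hom adjunction.
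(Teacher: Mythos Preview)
The paper does not supply its own proof of this lemma; it is simply quoted from \cite[Lem.~9.3.4]{GR17vol1}. Your argument is correct and is essentially the standard one: the forward direction uses only compactness of the unit, and the converse uses compact generation of $\cA$ together with the coincidence of compact and dualizable objects to reduce to the compactness of $a\otimes m$ for compact $a$, which follows since $a\otimes(-)$ has a continuous right adjoint $a^\vee\otimes(-)$.
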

We will use the following result to perform renormalization of sheaf categories.

\begin{proposition}(\cite[Proposition 8.7.4]{GR17vol1})\label{prop:compact_objects_in_relative_tensor}
    Let $\cA$ be a stable monoidal category, and $\cM$, resp. $\cN$, is a right, resp. left, $\cA$-module. Assume that $\cA$, $\cM$, and $\cN$ are compactly generated, and the functors 
    $$
\cA\otimes \cA\rightarrow \cA,\ \cA\otimes \cM\rightarrow \cM,\ \cN\otimes \cA\rightarrow \cN
    $$
    preserve compact objects. Then the insertion functor 
    $$
    \mathrm{ins}:\cM\otimes \cN\rightarrow \cN\otimes_\cA\cM
    $$
    preserves compact objects and $\cM\otimes_\cA\cN$ is compactly generated by objects of the form $\mathrm{ins}(m\boxtimes n)$ for $m\in\cM^\omega$ and $n\in \cN^\omega$.
\end{proposition}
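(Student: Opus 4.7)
The plan is to compute $\cM\otimes_\cA\cN$ as a geometric realization in $\Pr^L_{\st}$ of the two-sided bar diagram, and then use the hypotheses to show this colimit remains compactly generated with compact objects coming from the insertion. By \cite[\S 4.8.1]{HA},
\[
    \cM\otimes_\cA\cN \;\simeq\; \bigl|\,\cM\otimes \cA^{\otimes\bullet}\otimes \cN\,\bigr|
\]
in $\Pr^L_{\st}$, with simplicial structure maps coming from the multiplication on $\cA$ and the actions on $\cM,\cN$; the insertion functor $\mathrm{ins}$ is the canonical map from the $0$-simplices into this colimit.

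Under the hypotheses, each of $\cA\otimes\cA\to\cA$, $\cA\otimes\cM\to\cM$, and $\cN\otimes\cA\to\cN$ preserves compact objects. Combined with the fact that for compactly generated $\cC,\cC'\in\Pr^L_{\st,\omega}$ the tensor product $\cC\otimes\cC'$ is again compactly generated with $(\cC\otimes\cC')^\omega\simeq\cC^\omega\otimes\cC'^\omega$ in $\Cat^\perf$ (Definition \ref{def:small_relative_tensor}), every face and degeneracy map of the bar diagram preserves compact objects. Thus the bar diagram lifts to a simplicial object in $\Pr^L_{\st,\omega}$, which via the equivalence $(-)^\omega$ transfers to a simplicial object $B_\bullet^\omega$ in $\Cat^\perf$ whose geometric realization is by definition $\cM^\omega\otimes_{\cA^\omega}\cN^\omega$.

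The main technical step, and the principal obstacle, is to verify that the equivalence $\mathrm{Ind}:\Cat^\perf\xrightarrow{\simeq}\Pr^L_{\st,\omega}$ carries this realization to the geometric realization of the original bar diagram in the ambient $\Pr^L_{\st}$, yielding
\[
    \mathrm{Ind}\bigl(\cM^\omega\otimes_{\cA^\omega}\cN^\omega\bigr)\;\simeq\;\cM\otimes_\cA\cN.
\]
This requires showing that geometric realizations of simplicial objects in $\Pr^L_{\st,\omega}$ (with compact-preserving transition functors) agree with the corresponding realizations computed in $\Pr^L_{\st}$ — a standard but delicate invocation of Lurie's criterion characterizing when a colimit in $\Pr^L_{\st}$ preserves compact generation, cf.\ \cite[\S 5.5.7]{HTT}. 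One must argue carefully to avoid conflating the inherited colimits in the subcategory $\Pr^L_{\st,\omega}$ with those in $\Pr^L_{\st}$; the key point is that a reflexive coequalizer (and hence any geometric realization, being sifted) of compactly generated categories along compact-preserving maps is again compactly generated, with compact generators produced by the zero-simplex insertion. Once this comparison is in hand, the compact objects of $\cM\otimes_\cA\cN$ form the idempotent completion of the essential image of $\mathrm{ins}:\cM^\omega\otimes\cN^\omega\to(\cM\otimes_\cA\cN)^\omega$, so $\mathrm{ins}(m\boxtimes n)$ is compact for compact $m,n$, and these objects compactly generate $\cM\otimes_\cA\cN$.
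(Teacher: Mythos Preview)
The paper does not give its own proof of this proposition: it is simply quoted from \cite[Proposition 8.7.4]{GR17vol1} and used as a black box. So there is no in-paper argument to compare against.

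Your outline is the standard one and is essentially what appears in Gaitsgory--Rozenblyum: compute $\cM\otimes_\cA\cN$ as the geometric realization of the bar simplicial object in $\Pr^L_{\st}$, observe that under the hypotheses every face and degeneracy preserves compact objects, and then invoke the fact that a colimit in $\Pr^L_{\st}$ of compactly generated categories along compact-preserving functors is again compactly generated, with compact generators coming from the terms. The only place your write-up is a bit soft is the last step, which you yourself flag: the cleanest way to phrase it is that the symmetric monoidal equivalence $\Ind:\Cat^\perf\xrightarrow{\simeq}\Pr^L_{\st,\omega}$ of Definition~\ref{def:small_relative_tensor} identifies colimits on both sides, and the forgetful inclusion $\Pr^L_{\st,\omega}\hookrightarrow\Pr^L_{\st}$ preserves colimits (equivalently, $\Ind$ of a colimit in $\Cat^\perf$ computes the colimit in $\Pr^L_{\st}$; see \cite[Chapter 1, \S 7.2]{GR17vol1} or \cite[\S 5.5.7]{HTT}). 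Once that is in place, $(\cM\otimes_\cA\cN)^\omega$ is the idempotent completion of the image of $\cM^\omega\otimes\cN^\omega$ under $\mathrm{ins}$, which gives both conclusions. So your proposal is correct, modulo spelling out that colimit-comparison more explicitly.
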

In this paper, the following special case of the above proposition will be useful to us.

\begin{corollary}(\cite[Corollary 9.3.3]{GR17vol1})\label{coro:compact_objects_in_tensor_rigid}
Let $\cA$ be rigid stable monoidal category, and $\cM$, resp. $\cN$, is a right, resp. left, $\cA$-module. Then the insertion functor
 $$
    \mathrm{ins}:\cM\otimes \cN\rightarrow \cN\otimes_\cA\cM
    $$
    preserves compact objects. If $\cA$, $\cM$, and $\cN$ are compactly generated, 
$\cN\otimes_\cA\cM$ is compactly generated by objects of the form $\mathrm{ins}(m\boxtimes n)$ for $m\in\cM^\omega$ and $n\in \cN^\omega$.

\end{corollary}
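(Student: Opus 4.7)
The plan is to deduce both claims of the corollary from \autoref{prop:compact_objects_in_relative_tensor}, using rigidity of $\cA$ to verify its hypotheses on the multiplication and the actions.

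First I would unpack rigidity: every object $a \in \cA$ is dualizable, so for each such $a$ the left multiplication $a \otimes -$ on $\cA$, the action $a \otimes -$ on $\cM$, and the action $- \otimes a$ on $\cN$ admit right adjoints of the form $a^\vee \otimes -$, respectively $- \otimes a^\vee$. Since $a^\vee$ is itself dualizable (with bidual $a$), these right adjoints are also left adjoints, and in particular preserve all colimits. By the standard criterion (a functor with a continuous right adjoint preserves compact objects), $a \otimes -$ then preserves compact objects, and similarly for the other actions. Consequently the three functors
\[
\cA \otimes \cA \to \cA, \qquad \cA \otimes \cM \to \cM, \qquad \cN \otimes \cA \to \cN
\]
send external tensor products $a \boxtimes a'$, $a \boxtimes m$, $n \boxtimes a$ of compact objects to compact objects.

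In the compactly generated setting this already suffices: by the standard identification of compact objects in the Lurie tensor product of compactly generated stable categories, $(\cA \otimes \cA)^\omega$, $(\cA \otimes \cM)^\omega$, and $(\cN \otimes \cA)^\omega$ are each generated under finite colimits and retracts by external tensors of compact objects. Hence the three functors above preserve all compact objects, and \autoref{prop:compact_objects_in_relative_tensor} applies verbatim to yield both that $\mathrm{ins}$ preserves compact objects and that $\cN \otimes_\cA \cM$ is compactly generated by objects of the form $\mathrm{ins}(m \boxtimes n)$ with $m \in \cM^\omega$, $n \in \cN^\omega$.

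To obtain the preservation claim in full generality (without the compact generation assumption on $\cA$, $\cM$, $\cN$), I would combine \autoref{lemma:rigid} with a direct analysis of the two-sided bar construction: rigidity of $\cA$ is equivalent to the self-duality of $\cA$ as an $\cA$-bimodule, which endows the insertion $\cN \otimes \cM \to \cN \otimes_\cA \cM$ with a continuous right adjoint. Since any colimit-preserving functor between presentable stable categories admitting a continuous right adjoint automatically preserves compact objects, the claim follows. The main obstacle is making this last step fully precise in a self-contained way; however, all applications of the corollary in \Cref{section:construction_of_the_functor} take place in the compactly generated setting, so the direct reduction to \autoref{prop:compact_objects_in_relative_tensor} suffices for our purposes.
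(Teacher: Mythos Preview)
The paper does not supply its own proof of this corollary; it simply records the citation to \cite[Corollary 9.3.3]{GR17vol1}. So there is no in-paper argument to compare against, and your reduction to \autoref{prop:compact_objects_in_relative_tensor} is indeed the natural route.

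That said, your opening sentence misstates what rigidity means in the sense of \cite[Chapter~1, \S 9]{GR17vol1}: a rigid monoidal DG category is \emph{not} one in which every object is dualizable. The definition requires that the unit be compact, that the multiplication functor $\cA\otimes\cA\to\cA$ admit a \emph{continuous} right adjoint, and that every \emph{compact} object be left and right dualizable. For instance $\Vect$ is rigid, yet an infinite-dimensional vector space has no dual. Once you restrict your dualizability argument to compact $a\in\cA^\omega$, the verification that multiplication and the two action functors send external tensors of compacts to compacts goes through, and in the compactly generated case \autoref{prop:compact_objects_in_relative_tensor} applies exactly as you say. In fact the continuity of $\mathrm{mult}^R$ is already part of the definition, so no detour through duals is needed for $\cA\otimes\cA\to\cA$; for the module actions the continuity of the right adjoint is a standard consequence of rigidity (cf.\ \cite[Chapter~1, Lemma~9.3.2]{GR17vol1}), which is the cleaner way to check the hypotheses of \autoref{prop:compact_objects_in_relative_tensor}.

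Your honest acknowledgment that the first claim without compact generation is not fully handled is appropriate; the bimodule self-duality idea is in the right spirit but would need to be made precise. Since every invocation of this corollary in \Cref{section:construction_of_the_functor} is in the compactly generated regime, this residual gap does not affect the paper.
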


\subsection{Sheaf theory} \label{Section:sheaf theory}  Throughout the paper, we work with $\cD$-modules. 
To make our exposition self-contained, we briefly summarize   the machinery developed in \cite{beraldo2017loop, raskin_dmodules_infinite_var} and refer the reader to \textit{op. cit.} for more details.

Let $\mathrm{AffSch}$ be the $(1,1)$-category of classical affine schemes over $k$ and $\mathrm{AffSch}^{f.t.}$ its full subcategory of finite type affine schemes. Denote by $\mathrm{PreStk}:=\mathrm{Hom}(\mathrm{AffSch},\mathrm{Gpd})$ the category of classical prestacks. Let $\cD^!:\mathrm{AffSch}^{op}\rightarrow \mathrm{DGCat}$
be the left Kan extension of the functor $\cD:\mathrm{AffSch}^{f.t.}\rightarrow \mathrm{DGCat}$ which assigns to each finite type affine scheme $S$ its DG-category of $\cD$-modules, and attaches to each morphism $S_1\rightarrow S_2$ the functor $f^!$. Right Kan extension gives rise to a functor
$$
\cD^!:\mathrm{PreStk}^{op}\rightarrow \mathrm{DGCat}.
$$

Similarly, we have a dual version of $*$-$\cD$-modules. Let $\cD^*:\mathrm{AffSch}\rightarrow \mathrm{DGCat}$ be the right Kan extension of the functor $\cD:\mathrm{AffSch}^{f.t.}\rightarrow \mathrm{DGCat}$ which  assigns to a finite type affine
scheme $S$ the DG-category of $\cD$-modules $\cD(S)$, and attaches to a morphism $f:S_1\rightarrow S_2$ the corresponding lower $*$ functor. Left extension produces a functor
$$
\cD^\ast:\mathrm{PreStk}\rightarrow \mathrm{DGCat}.
$$

In this paper, we will mostly work with categories of $\cD$-modules on the following infinite dimensional varieties:

\begin{definition}[Placidness] ${}$ \label{placid_ind_scheme}
\begin{itemize}

        \item A scheme/algebraic space $X$
        \textit{admits a placid presentation}
        if $X\simeq \lim_i X_i$ for a cofiltered
     limit of finite type 
        schemes/algebraic spaces,
        such that
        transition maps 
        $X_i\to X_j$ are 
        smooth and affine. In this case, we call $X$ a \textit{placid} scheme/algebraic space.

        \item An ind-scheme $X$ is \textit{placid} if  $X=\varinjlim X_i$ is a filtered colimit of placid schemes along closed embeddings of finite presentation. Let $H$ be a placid 
        group scheme acting on 
        a placid ind-scheme $X$. 
        We call $X$ a \textit{$H$-placid 
        ind-scheme} if $X$ is placid and
         each placid $X_i$ in the placid presentation is 
        stable under the $H$-action. 
    \end{itemize}
\end{definition}

\begin{example}[Loops and positive loops are placid]
Let $X\in \mathrm{AffSch}^{f.t.}$ be smooth affine of finite type. The  $n$-jets $L^nX$,
given by $L^nX(R):=X(R[t]/t^n)$
for $R\in \CAlg_k$
is represented by an affine scheme of finite type. The presentation
$L^+X \simeq \lim_n L^nX$, exhibits $L^+X$ as a placid scheme.

The loop space $LX$ is a placid
ind-scheme. 
In fact, let $X\subset \mathbb{A}^n$ be a closed embedding. Define 
$LX^i:= LX\cap t^{-i}L^+\mathbb{A}^n$,
then each $LX^i$ is a placid 
scheme with $LX\simeq \varinjlim_i 
LX^i$ an ind-placid presentation
of $LX$.
Note that if we further require 
the transition map in a placid
scheme to be surjective
(for example, in 
\cite[\S 4.2.]{raskin_dmodules_infinite_var}), then $LX^i$
is not necessarily placid
in this sense (see 
\cite[Example 7.2.2]{BZSV}).
\end{example}

\begin{Example}
    \label{ex:sheaf_on_placid_scheme}Let $X=\varinjlim_i X_i$ be a placid scheme. There is an equivalence $\cD^*(X)\simeq \varinjlim_i \cD(X_i)$, where the connecting morphism $\cD(X_i)\rightarrow \cD(X_j)$ is given by the $*$-pullback along the smooth morphism $X_j\rightarrow X_i$. Similarly, there is an equivalence $\cD^!(X)\simeq \varprojlim_i \cD(X_i)$ with the connecting morphism given by the right adjoint of $!$-pullbacks. The two categories $\cD^*(X)$ and $\cD^!(X)$ are compactly generated and are canonically dual to each other. 

\end{Example}

\begin{Example}
    \label{ex:sheaf_placid_ind}
    Let $X=\varinjlim_i X_i$ be a placid ind-scheme. We have $ \cD^!(X)=\varinjlim_i \cD^!(X_i)$ with connecting morphism given by the left adjoint of the $!$-pullback $\cD^!(X_j)\rightarrow \cD^!(X_i)$. Similarly, we have $\cD^*(X)=\varinjlim_i \cD^*(X_i)$ with connecting morphism given by the right adjoint of the $*$-pushforward $\cD(X_i)\rightarrow \cD(X_j)$. If $X$ admits a dimension theory, there is a canonical duality $\cD^!(X)\simeq \cD^*(X)$ (cf.\ \cite{raskin_dmodules_infinite_var}). In this case, we will not distinguish between the $!$ and $* $ versions of $\cD$-modules. 
\end{Example}

\begin{definition}[Iwahori and spherical orbits on loop space]
\label{def:orbits_iwahori_spherical}
Let $X$ be a smooth affine spherical 
$G$-variety. Let 
$\{\cO_v\}_{v\in W_{X,\ext}}$ 
and $\{LX_{\lambda}\}_{\Lambda^+_X}$
be two partially
ordered sets of 
$I$-orbits of $LX$ and 
$L^+G$-orbits of $LX$,
respectively, with the indexing 
sets denoted as $W_{X,\ext}$ and 
$\Lambda_X^+$, respectively
\footnote{this notation is suggested by the fact that in group case $X=G$ acted by $H\times H$, we recover the dominant coweights $\Lambda^+$ and extended affine Weyl group $W_{\text{ext}}$ as indexing sets for and $L^+G\backslash LG/L^+G$ and $I\backslash LG/I$, respectively}. 
Denote their 
orbit closures to be 
$\overline{\cO}_v$
and $\overline{LX}_{\lambda}$ correspondingly. 
We also denote the embeddings as
\begin{align*} 
i_v:& \cO_v \xrightarrow{j_v}
\overline{\cO}_v 
\xrightarrow{\overline{i}_v}
LX, \\
i_{\lambda}: & LX_{\lambda} 
\xrightarrow{j_{\lambda}} \overline{LX}_{\lambda}
\xrightarrow{\overline{i}_{\lambda}} LX
\end{align*}
where for $\bullet$ be either in
$W_{X,\text{ext}}$ or $\Lambda_X^+$,
$j_\bullet, \overline{i}_\bullet,
i_\bullet=\overline{i}_\bullet\circ j_\bullet$ are open, closed, and locally closed
embeddings, respectively. 
\end{definition}

The following placidness result is useful to us. 
\begin{thm}\cite[Theorem 36]{chenyi_2025singularitiesorbitclosuresloop}\label{thm:chen_Ti}
The presentation
 $\varinjlim_{\lambda\in\Lambda_X^+}\overline{LX}_\lambda$, resp. $\varinjlim_{v\in W_{X,\mathrm{ext}}}\overline{\cO}_v$ makes $LX$ an $L^+G$, resp. $I$-placid ind-scheme.
\end{thm}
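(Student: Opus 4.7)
The plan is to split the claim into two parts: (i) each of the posets $\Lambda^+_X$ and $W_{X,\mathrm{ext}}$ gives a filtered cover of $LX$ by the indexed family of orbit closures, with closed embeddings of finite presentation between the terms; and (ii) each individual orbit closure $\overline{LX}_\lambda$ (resp.\ $\overline{\cO}_v$) is itself a placid scheme in the sense of \autoref{placid_ind_scheme}, stable under the action of $L^+G$ (resp.\ $I$). Together these two claims are exactly the definition of an $L^+G$- (resp.\ $I$-)placid ind-scheme, so this reduction is clean.

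For (i), the input is the Cartan-type decomposition for smooth affine spherical $G$-varieties: the $L^+G$-orbits on $LX$ are indexed by $\Lambda^+_X$ with a ``spherical Bruhat'' closure order, and refining by the Iwahori decomposition on $LG$ produces the $W_{X,\mathrm{ext}}$-indexed Iwahori orbits. Directedness of each poset follows by noting that any finite union of orbits is contained in some truncation $LX^i = LX \cap t^{-i}L^+\mathbb{A}^n$ and hence in finitely many orbit closures, any upper bound of which serves as a common upper bound. Closedness of the transition maps is the standard fact that orbit closures are closed, and finite presentation comes from finite presentation of the corresponding closure at any fixed jet level.

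The crux is (ii), which is the content of \cite[Theorem 36]{chenyi_2025singularitiesorbitclosuresloop}. The strategy is to pick $N \gg 0$ such that the orbit $LX_\lambda$ is already determined by its image in the jet space $L^N X$, and then present $\overline{LX}_\lambda \simeq \lim_{n \geq N} \overline{LX}_\lambda^{(n)}$ as the inverse limit of its $n$-jet truncations. Affineness of the transition maps $\overline{LX}_\lambda^{(n+1)} \to \overline{LX}_\lambda^{(n)}$ is inherited from the jet tower $L^{n+1}X \to L^n X$, which is affine because $X$ is smooth. The subtle point is \emph{smoothness} of these transition maps, which requires that the spherical orbit closure be suitably transverse to the fibers of the jet tower once $n$ is large. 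For the group case $X=G$ this is the classical smoothness result for affine Schubert varieties modulo their ``finite piece''; for a general smooth affine spherical $X$ it is the main technical contribution of Chen-Yi, resting on normality and Cohen-Macaulayness of spherical orbit closures. This is the main obstacle, and we would invoke it as a black box. Once granted, the $L^+G$- (resp.\ $I$-)stability of each $\overline{LX}_\lambda$ (resp.\ $\overline{\cO}_v$) is automatic from the equivariance of the orbit stratification, and assembling everything over $\Lambda^+_X$ and $W_{X,\mathrm{ext}}$ gives the two placid ind-presentations as stated.
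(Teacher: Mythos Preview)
The paper does not give a proof of this statement: it is quoted as \cite[Theorem 36]{chenyi_2025singularitiesorbitclosuresloop} and used as input (indeed, it is also listed as part of the standing \autoref{assum:intro}(a)). So there is no ``paper's own proof'' to compare against; your proposal is really a sketch of what the Chen--Yi argument should look like, and you correctly flag the key smoothness step as a black box to be imported from that reference.

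As a sketch your outline is reasonable and the reduction to (i) and (ii) is exactly right. A couple of small cautions if you were to flesh this out. First, your argument for directedness in (i) is slightly off: containment in a single truncation $LX^i$ does not by itself produce a single orbit closure dominating a given finite collection, since $LX^i$ need not be an orbit closure; one really wants the closure-order on $\Lambda^+_X$ (resp.\ $W_{X,\mathrm{ext}}$) to be directed, which comes from the spherical Bruhat combinatorics rather than from the ambient affine-space filtration. Second, attributing the smoothness of the jet-tower transition maps on orbit closures to ``normality and Cohen--Macaulayness'' is heuristic at best; the actual mechanism in the literature is formal smoothness of the arc/jet map over the smooth locus together with control of the singularities at finite level, and this is precisely the nontrivial content of Chen--Yi that one should cite rather than re-derive.
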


A central object of study in our paper is the full subcategory of coherent $\cD$-modules.

\begin{definition}\label{defn:coherent_D_mod}
     Let $\cY$ be a prestack, the full subcategory $\cD_c(\cY)\subset \cD(\cY)$ of \textit{coherent} $\cD$-modules consists of objects $\cF$ such that $f^!\cF$ is a  finite complex of $\cD$-modules with coherent cohomology
for any smooth morphism $f:S\rightarrow \cY$ where $S$ is a dg-scheme.
 \end{definition}
In fact, $f^!$ and $f^*$ only differ by a shift of degree,
therefore coherent $\cD$-modules may also be characterized by $f^*$.

For a finite type scheme, $X$, the canonical embedding 
$$
\cD_s(X)\hookrightarrow\cD_c(X)
$$
of the compact i.e. \textit{safe} $\cD$-modules is an equivalence. However, this embedding is usually not an equivalence for stacks. For example, the constant $\cD$-modules on the classifying stack for a complex reductive group $G$ is coherent but not compact.
\begin{definition}
    Let $\cY$ be a prestack,  the \textit{renormalized} category of $\cD$-modules $\cD^\ren (\cY)$ is defined to be $\Ind (\cD_c(\cY))$.
\end{definition}

\subsection{Categorical Actions}\label{section:categorical_actions}
\label{sec:categorical_action}

In this section, we briefly recall the machinery in categorical representation theory which is crucial to our construction.

Let $H$ be a group placid ind-scheme. The multiplication map $\mathrm{m}: H\times H\rightarrow H$ endows $\cD(H)$ with a monoidal structure. Set \[ \cD(H)\text{-mod}:= \LMod_{\cD(H)} \] 
to be the associated $(\infty,2)$-category of left $\cD(H)$-modules. For any $\cC\in \cD(H)\text{-mod}$, its $H$-invariants and co-invariants are given by
\begin{align*}
    \cC^H:=\mathrm{Hom}_{\cD(H)\text{-mod}}(\mathrm{Vect},\cC),\textnormal{ and } \cC_H:=\Vect\otimes_{\cD(H)}\cC.
\end{align*}

 If  the prounipotent radical of $H$ is of finite codimension, the tautological  functor $\mathrm{Oblv}$ "forgetting the $H$-invariants" admits a right  adjoint
 \begin{equation}  \label{eq:Oblv_AV_adjunction}
    \begin{tikzcd} [column sep=small]
    \mathrm{Oblv}: \cC^H \rar[shift left=0.5ex, ] & \cC: \mathrm{Av}_*^H.  \ar[l, shift left =0.5ex]
    \end{tikzcd} 
  \end{equation} 
It is well-known that any affine group scheme admits a Levi decomposition, thus $\mathrm{Av}_*^H$ is continuous by \cite[Section 4.2.1]{beraldo2017loop}.  

Let $K\subset H$ be an affine group subscheme and the prounipotent radical of $K$ of finite codimension. For any $\cC\in \cD(H)\textnormal{-mod}$,
\[ \cC^K:=\mathrm{Hom}_{\cD(K)-\mathrm{mod}}(\mathrm{Vect},\cC)\simeq \mathrm{Hom}_{\cD(H)-\mathrm{mod}}(\cD(H)_K,\cC), \] 
which naturally admits an action of
\[ \cH:=\mathrm{Hom}_{\cD(H)}(\cD(H)_K,\cD(H)_K)\simeq \cD(K\backslash H/K). \] 

  We have the adjunction 
  \begin{equation}  \label{eq:adjunction}
    \begin{tikzcd} 
    (-)\otimes_\cH \cD(H)_K:\mathrm{mod}\mh\cH \rar[shift left=0.5ex, ] & \cD(H)\mh\mathrm{mod}:(-)^K  \ar[l, shift left =0.5ex]
    \end{tikzcd} 
  \end{equation} 

  \begin{thm}\cite[Proposition 3.1.3,\ Theorem 3.1.5]{campbell2021affine}\label{thm:CD_counit} If $H/K$ is ind-proper,
\begin{enumerate}
    \item the left adjoint in the adjunction (\ref{eq:adjunction}) is fully faithful.
    \item the counit of the adjunction (\ref{eq:adjunction})
    \begin{equation}
        \cD(H/K)\otimes_{\cH} \cC^K\xrightarrow[]{c} \cC
    \end{equation}
is fully faithful and admits a continuous $\cD(H)$-equivariant right adjoint $c^R$.
\end{enumerate}
\end{thm}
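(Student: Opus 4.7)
The plan is to prove both statements by working with the adjunction
\[
L := (-)\otimes_\cH \cD(H)_K : \mathrm{mod}\text{-}\cH \rightleftarrows \cD(H)\text{-}\mathrm{mod} : (-)^K =: R,
\]
showing that the unit $\eta : \id \to RL$ is an equivalence (which gives (1)) and that the counit $\varepsilon_\cC : LR(\cC) \to \cC$ is fully faithful with a continuous right adjoint (which gives (2)). The geometric input in both cases is ind-properness of $H/K$, which is used to obtain continuity of proper pushforward along $H/K \to \pt$ and thereby to verify that certain bar constructions realize correctly after applying $(-)^K$.

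For (1), I would first compute the unit on the free right $\cH$-module $\cH$ itself: $RL(\cH) = (\cD(H)_K)^K$, and by the defining identification $\cH \simeq \cD(K\backslash H/K) \simeq \End_{\cD(H)}(\cD(H)_K)$ (which uses the finite-codimension hypothesis on the prounipotent radical of $K$), this is canonically $\cH$. To extend to a general $\cN \in \mathrm{mod}\text{-}\cH$, resolve $\cN$ by its canonical bar resolution $B_\bullet(\cN, \cH, \cH)$ of free $\cH$-modules and check that $R$ commutes with the resulting geometric realization; this commutation uses the continuity of $(-)^K$, which in our context is supplied by the averaging right adjoint in (\ref{eq:Oblv_AV_adjunction}). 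For (2), I would construct a continuous right adjoint $c^R$ to $c = \varepsilon_\cC$ using ind-properness: continuity of proper pushforward along $H/K \to \pt$ allows one to define $c^R(x)$ in terms of the dualizing sheaf $\omega_{H/K} \in \cD(H/K)$ paired against $x^K \in \cC^K$. The full faithfulness of $c$ then reduces, via the same bar-construction argument as in (1), to checking that $c^R c \simeq \id$ on simple tensors $f \otimes x$ with $f \in \cD(H/K)$ compact and $x \in \cC^K$, which becomes a direct projection-formula calculation.

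The main obstacle is that $H$ is only a placid ind-scheme, so the continuity and compact-generation properties of $\cD(H)$, of $(-)^K$, and of the relative tensor over $\cH$ must all be verified by hand rather than invoked as formal facts. In particular, the projection formula along $H/K \to \pt$ and the convergence of the bar resolution after applying $R$ both rest on the single ind-properness hypothesis, and the bulk of the work consists in translating this geometric property into the categorical continuity and base-change statements needed to close the formal adjunction argument.
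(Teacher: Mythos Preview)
The paper does not supply its own proof of this theorem; it is quoted verbatim from \cite[Proposition 3.1.3, Theorem 3.1.5]{campbell2021affine} and used as a black box. So there is no in-paper argument to compare against directly. That said, the paper's \autoref{lemma:calculation_of_monads} sketches the shape of the Campbell--Dhillon argument for part (2): one invokes monadicity (via \cite[Corollary C.2.3]{gaitsgory2015sheaves}) to identify the monad $c^R c$ with an explicit pull--push $q_* p^! p_* q^*$ along the convolution correspondence, and ind-properness enters through base change and continuity of these functors. Your approach---bar-resolve, reduce to a generator, use a projection formula---is morally the same endgame but packaged more abstractly.

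There is, however, a genuine soft spot in your outline for (1). You write that $R = (-)^K$ commutes with the bar geometric realization ``by continuity of $(-)^K$, supplied by the averaging right adjoint in (\ref{eq:Oblv_AV_adjunction}).'' But (\ref{eq:Oblv_AV_adjunction}) tells you that $\Av_*^K : \cC \to \cC^K$ is a continuous functor between \emph{objects}; it does not directly say that the 2-functor $(-)^K : \cD(H)\text{-}\mathrm{mod} \to \cH\text{-}\mathrm{mod}$ preserves colimits of \emph{categories}. The correct input here is that, for $K$ with finite-codimensional prounipotent radical, $K$-invariants and $K$-coinvariants canonically agree, $\cC^K \simeq \cC_K := \Vect \otimes_{\cD(K)} \cC$, and it is the coinvariants description that is manifestly cocontinuous. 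This is a standard fact in the categorical-representation-theory literature (and is what underlies the identification $\cH \simeq \cD(K\backslash H/K)$ you already used), but it is not the same statement as continuity of $\Av_*$, and your proof would not close without it. Also note that ind-properness of $H/K$ is not actually used in (1); the hypothesis doing the work there is the finite-codimension condition on $K$, while ind-properness is what makes the right adjoint $c^R$ in (2) continuous.
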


\begin{lem} \label{lem:invrariance_lemma}
    Let $\cC,\cD\in\cD(L^+G)\textnormal{-mod}$, and $F:\cC\rightarrow \cD$ a fully faithful functor of $\cD(L^+G)$-modules. For any $P\subset L^+G$ a parahoric subgroup, the associated $P$-invariant functor
    $$
F^P:\cC^P\rightarrow \cD^P
    $$
    is also fully faithful. If in addition, $F$ admits a right adjoint, then so does $F^P$.
\end{lem}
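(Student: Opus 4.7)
The plan is to proceed in two steps, corresponding to the two statements. Throughout I will use the adjunction $\Oblv : \cC^P \rightleftarrows \cC : \Av_*^P$ of (\ref{eq:Oblv_AV_adjunction}) and the strict commutativity $\Oblv \circ F^P = F \circ \Oblv$, which is a direct consequence of the $\cD(P)$-equivariance of $F$ obtained by restricting its $\cD(L^+G)$-action.

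For fully faithfulness of $F^P$, I will use the $(\infty,2)$-categorical formalism: the construction $\cC \mapsto \cC^P = \Hom_{\cD(P)\text{-mod}}(\Vect, \cC)$ is a 2-functor from $\cD(P)\text{-mod}$ to $\Cat$, and any 2-functor of the form $\Hom(W, -)$ preserves fully faithful 1-morphisms. The point that requires verification is that $F$, fully faithful as an underlying DG functor, is also fully faithful as a 1-morphism of the 2-category $\cD(P)\text{-mod}$. I would establish this by computing mapping anima between $\cD(P)$-equivariant functors $G_1, G_2 : \cE \to \cC$ as totalizations (via the bar resolution of the $\cD(P)$-action) of mapping anima $\Map_{\cC}(G_1(-), G_2(-))$ in $\cC$; then $F$ induces equivalences at each cosimplicial level in a $\cD(P)$-compatible way, yielding an equivalence on totalizations. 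Concretely, for $c_1,c_2\in \cC^P$,
\[
\Map_{\cC^P}(c_1, c_2) \simeq \Tot\bigl(\Map_{\cC}(\Oblv c_1, \text{act}^{\bullet} \Oblv c_2)\bigr),
\]
and the same formula for $\cD^P$; taking $\cE = \Vect$ then gives fully faithfulness of $F^P$.

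For the right adjoint, I would apply the adjoint functor theorem. Both $\cC^P$ and $\cD^P$ are presentable, as they are realized as totalizations of cosimplicial diagrams of presentable categories and continuous functors. The forgetful $\Oblv$ preserves all colimits (being left adjoint to $\Av_*^P$) and is conservative (a standard fact for invariants under affine group schemes whose prounipotent radical has finite codimension). Since $F$ admits a right adjoint and thus preserves small colimits, the identity $\Oblv \circ F^P = F \circ \Oblv$ together with conservativity of $\Oblv$ on $\cD^P$ forces $F^P$ to preserve small colimits. Lurie's adjoint functor theorem \cite[Corollary 5.5.2.9]{HTT} then produces the desired right adjoint.

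The main technical hurdle will be formalizing the bar-resolution presentation of mapping anima in $\cC^P$, which underpins both the 2-functoriality argument for fully faithfulness and the colimit-preservation check needed for the adjoint functor theorem. The ingredients are standard in categorical representation theory, but pinning down the correct $\infty$-categorical formulation — in particular the $\cD(P)$-equivariance of the levelwise equivalences induced by $F$ on each term of the cosimplicial diagram — deserves care.
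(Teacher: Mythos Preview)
Your proposal is correct. For the first assertion both you and the paper invoke the same principle: $(-)^P=\Hom_{\cD(P)\text{-mod}}(\Vect,-)$ is a representable $2$-functor and hence preserves fully faithful $1$-morphisms; the paper simply says this is ``formal'' while you unpack it via the cosimplicial bar presentation of mapping anima. Your verification that fully faithfulness of $F$ in $\DGCat$ upgrades to fully faithfulness as a $1$-morphism in $\cD(P)$-mod is the right thing to check, and the totalization argument you sketch is standard.

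For the second assertion the two routes diverge. The paper asserts the factorization $F^P=\Av_*^P\circ F\circ \Oblv$ and then notes that each factor admits a right adjoint (using continuity of $\Av_*^P$ and the adjoint functor theorem). You instead use the intertwining $\Oblv\circ F^P=F\circ\Oblv$ together with conservativity and colimit-preservation of $\Oblv$ to deduce that $F^P$ is colimit-preserving, and then apply the adjoint functor theorem directly. Your route is arguably more robust: the identity $F^P=\Av_*^P\circ F\circ\Oblv$ is equivalent to $\Av_*^P\circ\Oblv\simeq\id_{\cD^P}$, i.e.\ to $\Oblv$ being fully faithful, which holds when $P$ is prounipotent but not for a parahoric with nontrivial reductive quotient (already for $P=I$ one has $\End_{\cC^I}(\mathbf{1})$ acquiring a factor of $H^*(BT)$). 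Your conservativity argument avoids this point while reaching the same conclusion with the same essential input (continuity of $\Av_*^P$, hence of $\Oblv$'s left adjoint, and the adjoint functor theorem). The ``technical hurdle'' you flag---formalizing the bar presentation---is routine and well documented in the categorical representation theory literature the paper already cites.
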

\begin{proof}
    The first statement is formal. We prove the second statement. Note that $F^P$ equals the composition of the following functors
    $$
\cC^P\xrightarrow[]{\mathrm{Oblv}} \cC\xrightarrow[]{F} D \xrightarrow[]{\mathrm{Av}_*^P} \cD^P.
    $$
    By \cite[Section 4.2.1]{beraldo2017loop}, $\mathrm{Av}_*^P$ is continuous and therefore admits a right adjoint by the adjoint functor theorem. The statement thus follows.
\end{proof}

\subsection{The unramified relative local Langlands equivalence}\label{section:unramified_equivalence}
In this section, we recall the conjectural local
relative unramified Langlands equivalence 
\autoref{conj:relative_Langlands_unramified} coming
from relative Langlands duality. Let $G$ be a connected reductive 
group over a field $k$.
We also summarize  a class 
of Hamiltonian $G$-varieties $M$ 
that is expected, as in \cite[\S 4]{BZSV}, 
to be dual to Hamiltonian $\check{G}$-varieties
$\check{M}$.

\begin{definition}[Untwisted polarized hyperspherical varieties]
\label{def:polarized_hyperspherical}
Let $X$ be a \textit{spherical 
$G$-variety}, i.e. a normal $G$-variety, with a right $G$-action, over $k$ so that 
for any Borel subgroup $B$ of $G$, $X$ has a Zariski
open $B$-orbit. We further assume the following two conditions:
\begin{enumerate}
    \item $X$ is smooth and affine,
    \item The $B$-stabilizers of 
the points in the open $B$-orbit of $X$ are connected.
\footnote{for example, if $X$ has no roots of type $N$, 
in the sense of \cite[Remark 4.2.1]{BZSV}, then all 
$B$-stabilizers of $X$ are connected}
\end{enumerate}

Under these conditions, $M=T^*(2)X$ is a \textit{polarized 
hyperspherical $G$-variety}, with the weight-2 $\GG_m$ action on fiber,  in the sense of 
\cite[\S 3.5.1.]{BZSV} (see \ \cite[Proposition 3.7.4]{BZSV}
for the proof). In particular, $M$ is a \textit{graded
Hamiltonian $G$-variety}. 
\end{definition}

For a polarized hyperspherical variety $M=T^*(2)X$, \cite[\S 4.]{BZSV}
constructed a graded Hamiltonian 
$\check{G}$-space $\check{M}$,
which is expected \cite[Expectation 5.2.1]{BZSV} to be a hyperspherical $\check{G}$-variety
(in particular, it is affine and smooth).
In nice cases, $\check{M}=T^*(2)\check{X}$ is 
polarized. 

Rather than recalling the construction of $\check{M}$, 
we focus on one of the 
main evidences for such duality: the
categorical unramified relative local Langlands 
conjecture, which we restated as below,
for the special case of $M=T^*X$ as defined in 
\autoref{def:polarized_hyperspherical}
(we refer to \cite[\S 7]{BZSV} for the 
full statement of the conjecture). Let us recall the following definition on Satake full subcategory for our conjectures. 

\begin{definition} 
\label{def:satake_subcategory} 
Let $X$ be as in \autoref{def:polarized_hyperspherical}.
We define the following categories:
\begin{enumerate}
    \item $\cD_{ c}(K\backslash LX)^{\Sat}$:  the full subcategory generated by (as in \autoref{def:generation}) objects
    $$
    \langle \cF\ast\IC_0\vert \cF\in \cD_{ c}(K\backslash \Gr)\rangle,
    $$ which are given by the convolution action of  $\cD_{ c}(K\backslash \Gr)$.

\item $\cD_{ c}(I\backslash LX)^{\Sat}$:
 the full subcategory of $\cD_{c}(I\backslash LX)$
generated by 
$$\langle \cF\ast\IC_0\vert \cF\in \cD_c(I\backslash \Gr) \rangle.$$ 
\end{enumerate}
\end{definition}

\begin{conjecture}[Unramified relative local Langlands]
\label{conj:relative_Langlands_unramified}
Let $M=T^*(2)(X)$ be a polarized 
hyperspherical $G$-variety with the corresponding 
hyperspherical dual $(\check{G},\check{M})$, then 
there is an equivalence
of categories 
\[
\cD_{{c}}(L^+G\backslash LX)^{\Sat} \simeq \Perf(\sh^{1/2}(\check{M})/\check{G})
\]
such that 
\begin{enumerate}
    \item the equivalence is
    compatible with the action of 
derived geometric Satake equivalence 
\cite{bezrukavnikov2008equivariant}
\[
\cD_{{c}}(L^+G\backslash LG/L^+G)\simeq 
\Perf(\check{\mathfrak{g}}^*[2]/\check{G}). 
\]
\item Pointing condition is satisfied, i.e. 
the unramified basic object $\IC_0$, the $!$-extension from
the constant $\cD$-module on $L^+X$, is sent to $\cO_{\sh^{1/2}(\check{M})}$.
\end{enumerate}
\end{conjecture}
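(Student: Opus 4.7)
The plan is to reduce the conjectural equivalence to a comparison of endomorphism algebras of the distinguished generators on the two sides, invoking Lurie's Barr--Beck theorem in conjunction with Bezrukavnikov--Finkelberg's derived geometric Satake. Both sides are cyclic module categories: $\cD_c(L^+G \backslash LX)^{\Sat}$ is generated by $\IC_0$ under the convolution action of $\cD_c(L^+G \backslash LG/L^+G)$ by definition, while $\Perf(\sh^{1/2}(\check{M})/\check{G})$ is generated as a $\Perf(\check{\mathfrak{g}}^*[2]/\check{G})$-module by $\cO_{\sh^{1/2}(\check{M})}$, since $\check{M}$ is affine and the moment map $\pi: \sh^{1/2}(\check{M})/\check{G} \to \check{\mathfrak{g}}^*[2]/\check{G}$ makes the structure sheaf a relative generator. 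Derived Satake already identifies the two monoidal categories acting, so the equivalence is determined by, and determined from, matching the two underlying modules.

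I would first produce a candidate functor $\Phi: \Perf(\sh^{1/2}(\check{M})/\check{G}) \to \cD_c(L^+G \backslash LX)^{\Sat}$ by sending $\cO_{\sh^{1/2}(\check{M})} \mapsto \IC_0$ and extending by the Satake action: for $\cF \in \Perf(\check{\mathfrak{g}}^*[2]/\check{G})$ with image $\cF'$ under derived Satake, set $\Phi(\cF \otimes \cO_{\sh^{1/2}(\check{M})}) := \cF' \ast \IC_0$. Well-definedness requires that the relations among tensor products on the spectral side are matched by convolution relations on the automorphic side, which amounts to producing an isomorphism of algebra objects in the derived Satake category between $A_{\mathrm{spec}} := \pi_*\cO_{\sh^{1/2}(\check{M})}$ and $A_{\mathrm{aut}} := \underline{\End}(\IC_0)$. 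To compute $A_{\mathrm{aut}}$, I would use the placid presentation $LX = \varinjlim_{\lambda \in \Lambda_X^+} \overline{LX}_\lambda$ from \autoref{thm:chen_Ti} to set up a stratification/equivariant-cohomology spectral sequence whose associated graded should match $\pi_*\cO_{\sh^{1/2}(\check{M})}$, the shearing converting the weight grading on the spectral side into the cohomological grading on the automorphic side. Once such an algebra isomorphism is in hand, Barr--Beck promotes $\Phi$ to a $\Perf(\check{\mathfrak{g}}^*[2]/\check{G})$-linear equivalence; full faithfulness is then automatic on the generator and propagates by the module structure, and essential surjectivity follows from cyclic generation. The pointing and Satake-compatibility are manifest by construction.

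The main obstacle is of course the comparison $A_{\mathrm{spec}} \simeq A_{\mathrm{aut}}$, which carries essentially all the nonformal content of relative Langlands duality. The dual space $\check{M}$ is constructed in \cite{BZSV} precisely so that this identification should hold, but extracting a verifiable isomorphism requires input beyond formal manipulation, for instance Kostant--Whittaker style reduction, local-global compatibility via a global curve, an explicit orbit-by-orbit computation, or a microlocal comparison. This is why the unramified conjecture is open in the generality stated and why the present paper imports it as an assumption; verified cases include the group case $X = G$ (derived Satake itself), the Whittaker case $X = G/U$, and assorted symmetric and multiplicity-free cases treated in the references cited in \autoref{section:unramified_equivalence}.
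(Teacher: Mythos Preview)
The statement you are addressing is a \emph{conjecture}, and the paper does not prove it. It is explicitly imported as hypothesis (c) of \autoref{assum:intro}, and the paper's contribution is to deduce the tamely ramified analogue from it. The paper records known instances in \autoref{exam:hyperspherical_duality} but offers no general argument. You recognize this yourself in your final paragraph, so there is no discrepancy to report: there is no ``paper's own proof'' to compare against.

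That said, your sketch is a reasonable outline of the standard strategy used in the verified cases, and you have correctly isolated the nonformal core, namely the identification $\pi_*\cO_{\sh^{1/2}(\check{M})} \simeq \underline{\End}(\IC_0)$ as algebra objects. Two cautions if you were to pursue this further. First, cyclic generation of $\Perf(\sh^{1/2}(\check{M})/\check{G})$ by the structure sheaf over $\Perf(\check{\mathfrak{g}}^*[2]/\check{G})$ is not automatic from affineness of $\check{M}$ alone; one needs the moment map to be sufficiently well-behaved (e.g.\ that $\pi_*$ is conservative on perfect complexes, or equivalently that $\check{M}/\check{G}$ is affine over $\check{\mathfrak{g}}^*/\check{G}$), which is part of what hypersphericity is designed to ensure but still requires verification. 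Second, a Barr--Beck argument in the small/perfect setting is delicate: the monadicity criteria are naturally phrased for presentable categories, so one typically works with $\QCoh$ and $\Ind$-completions and then passes to compact objects, which introduces the same renormalization subtleties the paper handles in \autoref{section:construction_of_the_functor} for the Iwahori case.
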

We list representative $(G,X)$ with dual $(\check G,\check M)$ and the status of the unramified equivalence.

\begin{example} 
\label{exam:hyperspherical_duality} ${}$ 
\begin{enumerate}
    \item The group case: If $X=G$ acted on by 
    $G\times G$ then
    then $\check{M}=T^*(2)\check{X}$
    where $\check{X}=\check{G}$. 
    This is derived geometric Satake, cf.\ \cite[Theorem 5]{bezrukavnikov2008equivariant}. 
 \item Non-homogeneous cases: 
 \begin{enumerate}
      \item Godement-Jacquet/Gaiotto conjecture
    for $m=n$: 
    If $X=M_n$ with $G=\GL_n\times \GL_n$,
    then $\check{M}=T^*(2)\check{X}$ where
    $\check{X}=\GL_n\times \mathbb{A}^n$, acted on 
    by $G=\GL_n\times \GL_n$ via
    by $(g,v)\cdot (g_1,g_2)=  (g_1^{-1}gg_2,vg_2)$.
    The unramified equivalence is proven
    by Braverman–Finkelberg–Ginzburg–Travkin in 
    \cite[Theorem 3.6.1]{BravermanFinkelbergGinzburgTravkin_2021}.

 \end{enumerate}
    \item  Homogeneous spherical varieties: 
    \begin{enumerate}
    \item The Shalika Model: $X=\Sp_{2n}\backslash \GL_{2n}$
    then $\check{M}=
    T^*((\GL_n^{\text{diag}}U,\psi) 
    \backslash \GL_{2n})=\GL_{2n}\times^{\GL_n}
    \mathfrak{gl}^*_n$  is 
    proven in \cite[\S 1.6.]{chen2022quaternionic}.
    \item Linear periods: If $X=\GL_n\times \GL_n\backslash 
        \GL_{2n}$ then $\check{X}=\GL_{2n}\times_{\Sp_{2n}}
        \mathbb{A}^{2n}$. The unramified equivalence is 
        the upcoming work of Chen-Yi. 
        \item Gan-Gross-Prasad, theta correspondence:
    For $G=\SO_{n-1}\times \SO_n$ and $X=\SO_{n-1}\backslash \SO_{n-1}\times \SO_n$.
    Then $\check{G}= \SO(V_0)\times \Sp(V_1)$.
    Here $\dim V_0= n-1$ if $n$ odd and $n$ if $n$ even, $\dim V_1=n-1$ if $n$ odd and $\dim V_1=n-2$ 
    if $n$ even. Then $\check{M}=(V_0\otimes V_1)(1)$.
    This is proven by Braverman-Finkelberg-Travkin
    in \cite{BFT_orthosymplectic_satake}.
    \item  $X$ is affine homogeneous
    spherical of rank $1$: $\check{M}$ 
    is described in \cite[Table 4]{devalapurkar2024kutheoreticspectraldecompositionsspheres}. Unramified equivalences is also proved in 
    \textit{loc.\ cit.}, under certain hypothesis. 
    \end{enumerate}
\end{enumerate}    
\end{example}

\section{Relative Tensor  Products of Automorphic Categories}\label{section:construction_of_the_functor}

Let $X$ be as in \autoref{def:polarized_hyperspherical}. Specialize \Cref{thm:CD_counit} to the case $H=LG$,  $K=L^+G$, and $\cC=\cD(LX)$, we get a fully faithful functor of $\cD(H)$-modules
$$
\cD(\Gr)\otimes_{\cH_{K}}\cD(K\backslash LX)\rightarrow \cD(LX).
$$
Taking $I$-invariants, we have by \Cref{lem:invrariance_lemma} a fully faithful functor 
\begin{equation}\label{equation:I-equiv-functor}
    F: \cD(I\backslash \Gr)\otimes_{\cH_K}\cD(K\backslash LX)\rightarrow \cD(I\backslash LX).
\end{equation}

\label{functor:renormalized}

A crucial step in the proof of our main theorem is to establish the following equivalence
\begin{thm}\label{thm:F_cstr_Sat}
The functor (\ref{equation:I-equiv-functor}) induces an equivalence
\[ 
F_{{c}}^\mathrm{Sat}:\cD_c(I\backslash \Gr)\otimes_{\cH_{{c}, K}}\cD_c(K\backslash LX)^\mathrm{Sat}\simeq \cD_c(I\backslash LX)^{\mathrm{Sat}}.
\]
\noi   
\end{thm}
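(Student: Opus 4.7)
The plan is to deduce the equivalence from two inputs: (a) the canonical comparison functor $\iota$ from the small relative tensor product to the large relative tensor product is fully faithful, and (b) the fully faithful embedding $F : \cD(I\backslash \Gr)\otimes_{\cH_K}\cD(K\backslash LX) \to \cD(I\backslash LX)$ supplied by \Cref{thm:CD_counit} and \Cref{lem:invrariance_lemma}.

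Essential surjectivity is immediate. By construction, $F$ sends a pure tensor $\cF \boxtimes \cG$ to the convolution $\cF \star \cG$. Specializing $\cG = \IC_0$, each generator $\cF \star \IC_0$ of the target $\cD_c(I\backslash LX)^{\mathrm{Sat}}$ as in \Cref{def:satake_subcategory} arises as $F_c^{\mathrm{Sat}}(\cF \boxtimes \IC_0)$, so $F_c^{\mathrm{Sat}}$ hits a generating set and its essential image is the whole target.

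For full faithfulness, I would first prove the full faithfulness of
\[
\iota : \cD_c(I\backslash \Gr)\otimes_{\cH_{c, K}}\cD_c(K\backslash LX) \longrightarrow \cD(I\backslash \Gr)\otimes_{\cH_K}\cD(K\backslash LX)
\]
in two moves: first, a monad/bar-complex computation on $\cH_K$ (\Cref{prop:monad_map_space}) that expresses the mapping anima between pure tensors on the small side in terms of the $\cH_K$-action; second, a Künneth-type formula (\Cref{lem:kunneth_property}) that identifies these with the corresponding mapping anima on the large side by decoupling the $\Gr$ and $LX$ factors. Both inputs rely crucially on the $L^+G$-placid structure and the dimension theory for $LX$ guaranteed by \Cref{assum:intro}, together with rigidity of $\cH_{c,K}$ (\Cref{lemma:rigid}) and the compact-generation consequences of \Cref{coro:compact_objects_in_tensor_rigid}. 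Restricting $\iota$ to the full subcategory $\cD_c(I\backslash\Gr)\otimes_{\cH_{c,K}}\cD_c(K\backslash LX)^{\mathrm{Sat}}$ of objects built from pure tensors $\cF \boxtimes \IC_0$, and composing with the fully faithful $F$, we realize $F_c^{\mathrm{Sat}}$ as a fully faithful functor whose essential image is exactly $\cD_c(I\backslash LX)^{\mathrm{Sat}}$ by the previous paragraph.

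The main obstacle is the full faithfulness of $\iota$. Small relative tensor products are defined as Karoubi completions of compact objects in the large relative tensor, so their mapping anima carry no manifest formula; the monad calculation and Künneth-type identification are engineered precisely to remedy this, reducing an abstract question about compact objects in a relative tensor to a concrete $\cD$-module calculation on $\Gr$ and $LX$. Once $\iota$ is fully faithful, the remainder of the argument is a matter of tracing through generators.
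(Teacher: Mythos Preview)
Your proposal is correct and follows essentially the same route as the paper: establish full faithfulness of $\iota$ via the monad computation (\Cref{prop:monad_map_space}) and the K\"unneth formula (\Cref{lem:kunneth_property}), then compose with the fully faithful $F$ and identify the essential image with $\cD_c(I\backslash LX)^{\Sat}$ via its definition on generators. The only minor step you leave implicit, but which the paper records, is that the convolution functor $q_*p^*$ preserves coherent objects, so the composite $F\circ\iota$ actually lands in $\cD_c(I\backslash LX)$ rather than merely $\cD(I\backslash LX)$.
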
 

The rest of this section will devote to the proof of this theorem.


\subsection{Relative tensor product of coherent $\cD$-modules.}
We first construct a fully faithful embedding
\begin{equation}\label{functor:embedding_of_small_tensor}
    \iota:\cD_{{c}}(I\backslash \Gr)\otimes_{\cH_{c,K}}\cD_{{c}}(K\backslash LX)\rightarrow\cD(I\backslash \Gr)\otimes_{\cH_{K}}\cD(K\backslash LX).
\end{equation}


By the universal property of ind-completion (cf.\ \cite[Lemma 7.2.4] {GR17vol1}), the canonical embedding $\cD_c(I\backslash\Gr)\hookrightarrow \cD(I\backslash \Gr)$ induces the \textit{unrenormalization functor}
$$
\theta_1:\cD^\mathrm{ren}(I\backslash \Gr)\rightarrow \cD(I\backslash \Gr).
$$

Similarly, we obtain unrenormalizations
\begin{align*} 
    \theta_2:\cH^\mathrm{ren}_K\rightarrow \cH_K,\quad 
     \theta_3:\cD^\mathrm{ren}(K\backslash LX)\rightarrow \cD(K\backslash LX).
\end{align*}

Recall that relative tensor product of small categories cf.\ \Cref{def:small_relative_tensor}
$$
\cD_{{c}}(I\backslash \Gr)\otimes_{\cH_{c,K}}\cD_{{c}}(L^+G\backslash LX):=(\cD^\mathrm{ren}(I\backslash \Gr)\otimes_{\cH^\mathrm{ren}_K}\cD^\mathrm{ren}(K\backslash LX))^\omega.
$$
\begin{lemma}\label{lemma:unrenormalization}
    Unrenormalization functors $\theta_i$'s induce a functor 
    $$
\Theta: \cD^\mathrm{ren}(I\backslash \Gr)\otimes_{\cH_{K}^\mathrm{ren}}\cD^\mathrm{ren}(K\backslash LX)\rightarrow \cD(I\backslash \Gr)\otimes_{\cH_{K}}\cD(L^+G\backslash LX) .
    $$
\end{lemma}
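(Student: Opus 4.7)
The plan is to promote the unrenormalization functors $\theta_1,\theta_2,\theta_3$ to a compatible triple for the relative tensor product, and then invoke the functoriality of $\otimes_{(-)}$ on bimodule data. First, I would verify that the inclusion $\cD_c(-)\subset \cD(-)$ is stable under the convolution structures in play. Concretely, convolution on $\cH_K$ preserves coherence, so that $\cH_{c,K}\in\CAlg(\Cat^\perf)$; the left $\cH_K$-action on $\cD(I\backslash\Gr)$ restricts to an action of $\cH_{c,K}$ on $\cD_c(I\backslash\Gr)$; and the right $\cH_K$-action on $\cD(K\backslash LX)$ restricts to a $\cH_{c,K}$-action on $\cD_c(K\backslash LX)$. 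For the last two points I would use the $L^+G$-placid and $I$-placid presentations of $\Gr$ and $LX$ furnished by \autoref{thm:chen_Ti}, together with the fact that coherence is preserved under the relevant (ind-)proper pushforwards and smooth pullbacks defining convolution on each finite stratum.

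Given these compatibilities at the small level, the triple $(\cD_c(I\backslash\Gr),\cH_{c,K},\cD_c(K\backslash LX))$ is a bimodule datum in $\Cat^\perf$. Applying the symmetric monoidal ind-completion functor $\Ind:\Cat^\perf\to \Pr^L_{\st,\omega}$ of \autoref{def:small_relative_tensor} produces the triple $(\cD^\mathrm{ren}(I\backslash\Gr),\cH_K^\mathrm{ren},\cD^\mathrm{ren}(K\backslash LX))$ together with the relative tensor product appearing as the source of $\Theta$. The unrenormalization functors $\theta_i$ are the canonical colimit-preserving extensions of the inclusions $\cD_c(-)\hookrightarrow \cD(-)$, characterized by the universal property of $\Ind$. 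Since those inclusions are compatible with the convolution monoidal structure and the module actions by the previous paragraph, the tuple $(\theta_1,\theta_2,\theta_3)$ assembles into a morphism of bimodule data in $\Pr^L_{\st,\omega}$: namely, $\theta_2$ is monoidal and $\theta_1,\theta_3$ are $\theta_2$-linear module functors.

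The existence of $\Theta$ then follows from the functoriality of the relative tensor product $(-)\otimes_{(-)}(-)$ with respect to morphisms of such triples, which is standard in $\Pr^L$. The main obstacle I anticipate is the preservation of coherence by convolution on these infinite-dimensional placid ind-schemes: the structural push-pull morphisms are not of finite type, so some care is required when translating between the placid presentation and the $\cD$-module pullback and pushforward functoriality of Raskin. I would handle this by reducing to $L^+G$- and $I$-stable finite-dimensional strata via \autoref{thm:chen_Ti}, at which point coherence-preservation reduces to the familiar statements for proper pushforward and smooth pullback between finite-type $\cD$-module categories.
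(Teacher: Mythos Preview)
Your proposal is correct and follows essentially the same route as the paper: verify that $\theta_2$ is monoidal and that $\theta_1,\theta_3$ are $\theta_2$-linear module functors (the paper delegates this step to \cite[Lemma 4.1.17]{ho2022revisiting}, which encodes exactly the coherence-preservation-under-convolution argument you outline), and then invoke functoriality of the relative tensor product \cite[Proposition 4.4.2]{HA}. Your anticipated obstacle and its resolution via reduction to finite-type strata is precisely what underlies the cited lemma.
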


\begin{proof}
    Recall that the monoidal structure on $\cH_K$ and $\cH_K^{\rm ren}$ are induced by $*$-pullback and $*$-pushforward along the convolution diagram
$$
\begin{tikzcd}
    K\backslash \Gr\times K\backslash \Gr& K\backslash LG\times^{K} \Gr\arrow [l,"p",swap] \arrow[r,"q"]& K\backslash \Gr,
\end{tikzcd}
$$   
where $p$ is the natural projection morphism which exhibits $K\backslash LG\times^{K} \Gr$ as a $K$-torsor over $K\backslash \Gr\times K\backslash \Gr$, and $q$ is the ind-proper convolution morphism. 
    Thus, $\theta_2$ is a monoidal functor by \cite[Lemma 4.1.17]{ho2022revisiting}. In addition, $\theta_1$, resp. $\theta_3$ is functor of right, resp. left $\cH_{K}^{\rm ren}$-modules by \textit{\textit{loc.\ cit.}}. By the above argument and the functoriality of Lurie relative tensor product \cite[Proposition 4.4.2]{HA}, $\theta_i$'s induce the desired functor $\Theta$.
\end{proof}
We have the following commutative diagram
$$
\begin{tikzcd}[row sep=huge] 
    \cD^\mathrm{ren}(I\backslash \Gr)\otimes_{\cH_{K}^\mathrm{ren}}\cD^\mathrm{ren}(L^+G\backslash LX) \arrow[r,"\Theta"] & \cD(I\backslash \Gr)\otimes_{\cH_{K}}\cD(L^+G\backslash LX) \\
    \cD_c(I\backslash \Gr)\otimes_{\cH_{c,K}}\cD_c(K\backslash LX) \arrow[u,"i"] \ar[ur, "\iota",swap],
\end{tikzcd}
$$
where functor $i$ is the natural embedding, and $\iota:=\Theta\circ i$ is the desired functor (\ref{functor:embedding_of_small_tensor}). We will prove that $\iota$ is a fully faithful embedding.

\subsection{Compact generations}
\label{Compact generation} In this section, we find  generators of the small relative tensor product $\cD_c(I\backslash \Gr)^{\rm ren}\otimes_{\cH^{\rm ren}} \cD_c(K\backslash LX)^{\rm ren}$ such that the mapping spaces between them is easier to compute.  
\begin{lem}\label{lemma:compact_generation}
    The category $\cD^\mathrm{ren}(I\backslash \Gr)\otimes_{\cH_{K}^\mathrm{ren}}\cD^\mathrm{ren}(K\backslash LX)$ is compactly generated by objects of the form $\cF\otimes \cG\in \cD_c(I\backslash \Gr)\otimes\cD_c(K \backslash LX)$. 
\end{lem}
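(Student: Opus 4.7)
The plan is to apply \autoref{coro:compact_objects_in_tensor_rigid} (or, failing rigidity, \autoref{prop:compact_objects_in_relative_tensor}) to the triple $(\cA,\cM,\cN) = (\cH_K^{\ren},\, \cD^{\ren}(I\backslash \Gr),\, \cD^{\ren}(K\backslash LX))$, for which it will immediately follow that the relative tensor product is compactly generated by insertions of pairs of compact objects, and by definition of the renormalized categories those compact objects are precisely the coherent $\cD$-modules $\cF\in\cD_c(I\backslash\Gr)$ and $\cG\in\cD_c(K\backslash LX)$.

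First I would record that each of the three factors is compactly generated by construction: $\cD^{\ren}(-) := \Ind(\cD_c(-))$, so $\smbr{\cD^{\ren}(I\backslash\Gr)}^{\omega} \simeq \cD_c(I\backslash\Gr)$ and likewise for $\cD^{\ren}(K\backslash LX)$ and for $\cH_K^{\ren}$. Next I would verify the rigidity of $\cH_K^{\ren}$ as a stable monoidal category. This is the expected statement: the spherical Hecke category $\cH_K^{\ren}$ is rigid because the convolution structure is computed via the ind-proper convolution morphism $q$ in the diagram used in \autoref{lemma:unrenormalization}, and Verdier duality along $q$ provides the required dualizability data. (If one prefers to avoid invoking rigidity directly, one can instead appeal to \autoref{prop:compact_objects_in_relative_tensor} and check that the three action functors $\cH_K^{\ren}\otimes \cH_K^{\ren}\to \cH_K^{\ren}$, $\cD^{\ren}(I\backslash \Gr)\otimes \cH_K^{\ren}\to \cD^{\ren}(I\backslash \Gr)$, and $\cH_K^{\ren}\otimes \cD^{\ren}(K\backslash LX)\to \cD^{\ren}(K\backslash LX)$ each preserve compact objects; this reduces to the statement that convolution of coherent $\cD$-modules is coherent, which follows from properness of $q$ together with the $K$-placid structure on $LX$ from \autoref{thm:chen_Ti}.)

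With rigidity (or the preservation-of-compacts) in hand, \autoref{coro:compact_objects_in_tensor_rigid} (resp.\ \autoref{prop:compact_objects_in_relative_tensor}) gives that the insertion functor
\[
\mathrm{ins}\colon \cD^{\ren}(I\backslash \Gr)\otimes \cD^{\ren}(K\backslash LX) \longrightarrow \cD^{\ren}(I\backslash \Gr)\otimes_{\cH_K^{\ren}} \cD^{\ren}(K\backslash LX)
\]
preserves compact objects, and the target is compactly generated by objects of the form $\mathrm{ins}(\cF\boxtimes \cG)$ with $\cF\in \cD^{\ren}(I\backslash \Gr)^{\omega}$ and $\cG\in \cD^{\ren}(K\backslash LX)^{\omega}$. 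Under the identifications in the previous paragraph, these are exactly the objects $\cF\otimes \cG$ claimed in the statement.

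The step I expect to be the only nontrivial one is the rigidity of $\cH_K^{\ren}$ (or, equivalently, the preservation of coherence under spherical convolution). The cleanest input here is the ind-properness of $q$, ensured by the $K$-placid presentation of \autoref{thm:chen_Ti}, together with the duality $\cD^!\simeq \cD^\ast$ available because $LX$ admits a dimension theory by \autoref{assum:intro}(b); everything else is a direct application of the general categorical machinery recalled in \autoref{compact_generation}.
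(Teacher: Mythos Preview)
Your proposal is correct and follows essentially the same route as the paper: invoke rigidity of $\cH_K^{\ren}$ and apply \autoref{coro:compact_objects_in_tensor_rigid}. The paper simply cites \cite{campbell2024proof} for the rigidity of $\cH_K^{\ren}$ rather than sketching the ind-properness/Verdier-duality argument you outline, but the logical structure is identical.
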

\begin{proof}
We know by \cite{campbell2024proof} that $\cH^\mathrm{ren}_K$ is a rigid  monoidal category. The statement follows directly from \autoref{coro:compact_objects_in_tensor_rigid}.
\end{proof}
Since $\ins^{\rm ren}$ is a continuous functor, it suffices to show that $\iota$ induces equivalences of mapping spaces between objects in $\cD_c(I\backslash \Gr)\otimes\cD_c(K \backslash LX)$ to conclude $\iota$ is a fully faithful embedding.
\subsection{Mapping spaces}

To streamline notation, 

we adopt the following conventions till the end of this section:

\begin{itemize}
    \item $i:=\mathrm{ins}$, $i^{\rm r}:=\mathrm{ins}^{\rm ren}$, 
    \item $i^R:=\mathrm{ins}^R$, $i^{r,R}:=\mathrm{ins}^{\mathrm{ren},R}$ the right adjoint functors,
    \item   $\cS_I:=\cD(I\backslash \Gr), \cS_{c,I}:=\cD_c(I\backslash \Gr), \cS_I^{\rm ren}:=\cD^{\rm ren}(I\backslash \Gr)$,
  \item $\cT_H:=\cD(H\backslash LX), \cT_{c,H}:=\cD_c(H\backslash LX), \cT_H^{\rm ren}:=\cD^{\rm ren}(H\backslash LX)$ for $H=I,K$. 
\end{itemize}

    

Since the bar complex constructing Lurie's relative tensor product is functorial, $\Theta$ and $\theta_1\otimes \theta_3$ are intertwined by the insertion functors
 \begin{align*}
 & i:\cS_I\otimes\cT_K \rightarrow \cS_I\otimes_{\cH_K}\cT_K,\\
    &   i^{\rm r}:\cS_I^{\rm ren}\otimes\cT_K^{\rm ren} \rightarrow \cS_I^{\rm ren}\otimes_{\cH^\mathrm{ren}_K}\cT_K^{\rm ren}
 \end{align*}
 i.e. the following diagram commutes.
 \begin{equation}\label{commdiagran:2}
 \begin{tikzcd}
 {\cS_I^{\rm ren}\otimes_{\cH^\mathrm{ren}_K}\cT_K^{\rm ren}} \arrow[r,"\Theta"]    & {\cS_I\otimes_{\cH_K}\cT_K} \\
  \\
  {\cS_I^{\rm ren}\otimes\cT_K^{\rm ren} } \arrow[r,"\theta"] \arrow[uu," i^{\rm r}"]&     {\cS_I\otimes\cT_K}\arrow[uu,"i",swap]
  \end{tikzcd}
 \end{equation}
 where $\theta:=\theta_1\otimes \theta_3$. 

 By \autoref{prop:compact_objects_in_relative_tensor} (see also \autoref{coro:compact_objects_in_tensor_rigid}), the insertion functors both admit continuous right adjoints, which we denote by $i^R$ and $i^{\mathrm{r},R}$.
\begin{lemma}\label{lemma:fully_faithful}
    For any $\cF\otimes \cG, \cF'\otimes \cG'\in\cS_{c,I}\otimes\cT_{c,K}$  , there exists a morphism 
    $$
    \vartheta: \mathrm{Maps}_{\cS_I^{\rm ren}\otimes\cH_K^{\rm ren}}(\cF\otimes\cG,i^{\mathrm{r},R}\circ i^{\rm r}(\cF'\otimes\cG'))\rightarrow \mathrm{Maps}_{\cS_I^{\rm }\otimes\cS_K^{\rm }}(\theta(\cF\otimes\cG),i^R\circ i\circ\theta(\cF'\otimes\cG'))
    $$
    making the following diagram commutes
 $$
 \begin{tikzcd}[column sep=small]\label{commdia:1}
  \mathrm{Maps}_{\cS_I^{\rm ren}\otimes_{\cH^\mathrm{ren}_K}\cT_K^{\rm ren}}( {i}^{\rm r} (\cF\otimes\cG), {i} ^{\rm r}(\cF'\otimes\cG'))\arrow[dd,"\simeq",swap] \arrow[r,"\Theta"]    & \mathrm{Maps}_{\cS_I\otimes_{\cH_K}\cT_K} (\Theta( {i}^{\rm r} (\cF\otimes\cG))),\Theta( {i} ^{\rm r}(\cF'\otimes\cG')))\arrow[dd,"\simeq"]\\
  \\
  \mathrm{Maps}_{\cS_I^{\rm ren}\otimes\cT_K^{\rm ren} } (\cF\otimes\cG,i^{\mathrm{r},R}\circ {i}^{\rm r} (\cF'\otimes\cG'))\arrow[r,"\vartheta"] &     \mathrm{Maps}_{\cS_I\otimes\cT_K}(\theta(\cF\otimes\cG),i^R\circ {i} \circ\theta(\cF'\otimes\cG')),
 \end{tikzcd}
 $$
 where 
 \begin{enumerate}
     \item the top arrow is induced by $\Theta$,
     \item the left vertical equivalence are given by the adjunction $( {i}^{\rm r} ,i^{\mathrm{r},R})$,
     \item the right vertical equivalence is given by the commutative diagram (\ref{commdiagran:2}) and the adjoint pair$( {i}^{\rm r} ,i^{\mathrm{r},R})$.
     \end{enumerate}
\end{lemma}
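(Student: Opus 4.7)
The plan is to construct $\vartheta$ as a Beck--Chevalley type natural transformation associated to the commutative square (\ref{commdiagran:2}) and then verify commutativity of the outer rectangle by a standard adjunction diagram chase using the triangle identities.

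First, I will extract from the identification $\Theta \circ i^{\rm r} \simeq i \circ \theta$ provided by (\ref{commdiagran:2}) a canonical natural transformation
\[
\alpha \colon \theta \circ i^{\mathrm{r},R} \longrightarrow i^R \circ \Theta,
\]
built as the composite
\[
\theta \circ i^{\mathrm{r},R} \xrightarrow{\eta\, \theta i^{\mathrm{r},R}} i^R \circ i \circ \theta \circ i^{\mathrm{r},R} \xrightarrow{\simeq} i^R \circ \Theta \circ i^{\rm r} \circ i^{\mathrm{r},R} \xrightarrow{i^R \Theta \epsilon^{\rm r}} i^R \circ \Theta,
\]
where $\eta$ is the unit of $(i, i^R)$, $\epsilon^{\rm r}$ is the counit of $(i^{\rm r}, i^{\mathrm{r},R})$, and the middle equivalence comes from (\ref{commdiagran:2}). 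Both right adjoints exist and are continuous by \autoref{coro:compact_objects_in_tensor_rigid} together with rigidity of $\cH_K^{\rm ren}$, proved in \cite{campbell2024proof}. For a morphism $\psi \colon \cF \otimes \cG \to i^{\mathrm{r},R}\circ i^{\rm r}(\cF' \otimes \cG')$, I then set
\[
\vartheta(\psi) := \alpha_{i^{\rm r}(\cF'\otimes\cG')} \circ \theta(\psi),
\]
which has the required source $\theta(\cF \otimes \cG)$ and target $i^R \circ i \circ \theta(\cF' \otimes \cG')$ in $\cS_I \otimes \cT_K$.

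Second, to verify commutativity of the outer rectangle, I chase a morphism $\phi \colon i^{\rm r}(\cF \otimes \cG) \to i^{\rm r}(\cF' \otimes \cG')$ along both routes. Going down-then-right, $\phi$ transposes under the left vertical adjunction to $\phi^\flat = i^{\mathrm{r},R}(\phi) \circ \eta^{\rm r}_{\cF \otimes \cG}$, which $\vartheta$ sends to $\alpha_{i^{\rm r}(\cF'\otimes\cG')} \circ \theta(i^{\mathrm{r},R}(\phi)) \circ \theta(\eta^{\rm r}_{\cF\otimes\cG})$. Going right-then-down, $\Theta(\phi)$, identified via (\ref{commdiagran:2}) with a map $i\theta(\cF\otimes\cG) \to i\theta(\cF'\otimes\cG')$, transposes under $(i, i^R)$ to $i^R(\Theta(\phi)) \circ \eta_{\theta(\cF\otimes\cG)}$. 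Substituting the definition of $\alpha$ into the first expression, applying naturality of $\eta$ to commute $\theta(i^{\mathrm{r},R}(\phi)) \circ \theta(\eta^{\rm r}_{\cF\otimes\cG})$ past the unit, then using naturality of $\epsilon^{\rm r}$ followed by the triangle identity $\epsilon^{\rm r}_{i^{\rm r}(X)} \circ i^{\rm r}(\eta^{\rm r}_X) = \mathrm{id}_{i^{\rm r}(X)}$ (transported through the equivalence $\Theta \circ i^{\rm r} \simeq i \circ \theta$) collapses the composite to exactly $i^R(\Theta(\phi)) \circ \eta_{\theta(\cF\otimes\cG)}$.

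The main subtlety is essentially bookkeeping: ensuring that the equivalence $\Theta \circ i^{\rm r} \simeq i \circ \theta$ from (\ref{commdiagran:2}) is a coherent equivalence of functors of $\infty$-categories, rather than merely an objectwise one, so that $\alpha$ is a bona fide natural transformation and the triangle identities can be applied at the homotopy-coherent level. This coherence is automatic from \autoref{lemma:unrenormalization}, where $\Theta$ is obtained as the image of bar-complex functoriality under the monoidal unrenormalization $\theta$, so that (\ref{commdiagran:2}) records a homotopy of functors of bimodules. Beyond this, no deeper input is required; the argument is a routine $\infty$-categorical adjunction chase.
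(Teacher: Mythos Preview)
Your proposal is correct and follows the same approach as the paper: the paper's proof says only that ``the construction of $\vartheta$ is given by the Beck--Chevalley correspondence via a formal argument,'' and you have spelled out precisely that formal argument by constructing the mate $\alpha$ and verifying the square via the triangle identities. Your treatment is considerably more detailed than the paper's one-line proof, but the underlying idea is identical.
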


 \begin{proof}

The construction of $\vartheta$ is given by the Beck-Chevalley correspondence via a formal argument.
\end{proof}

 Thus, to prove the full faithfulness of $\iota$, it suffices to prove $\vartheta$ is an equivalence. This will follow from two technical ingredients: a K\"{u}nneth type formula in the next section and calculation of  monads $i^{\mathrm{r},R}\circ i^{\rm r}$ and ${i} ^{R}\circ  i$ in \autoref{monad_insertion}.

\subsection{A K\"{u}nneth-type formula} 
In this section, we prove the final piece of ingredients establishing (\ref{functor:embedding_of_small_tensor}). 
\begin{lem} \label{lem:kunneth_property}
Let $\cF, \cF' \in \cS_{c,I}$ and  $\cG,\cG'\in \cT_{c,K}$, there is an equivalence 
    \[ 
     \underline{\mathrm{Maps}}_{\cS_I\otimes\cT_K} (\cF\otimes\cG,\cF'\otimes\cG')  \simeq  \underline{\mathrm{Maps}}_{\cS_I}(\cF,\cF')\otimes \underline{\mathrm{Maps}}_{\cT_K}(\cG,\cG') \]
     \noi where $\underline{\Map}$ denotes the $\Vect$ enriched mapping space. 
\end{lem}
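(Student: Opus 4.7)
The plan is to reduce the statement to a classical Künneth formula for $\cD$-modules on a product of finite-type Artin stacks by exploiting the fact that coherent $\cD$-modules on a placid ind-scheme are supported on a finite-type piece.

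First, I would invoke the placid presentations $\Gr \simeq \varinjlim_{v \in W_{X,\mathrm{ext}}} \overline{\cO}_v$ and $LX \simeq \varinjlim_{\lambda \in \Lambda_X^+} \overline{LX}_\lambda$ from \autoref{thm:chen_Ti}, together with \autoref{defn:coherent_D_mod}, to conclude that $\cF,\cF' \in \cS_{c,I}$ (resp.\ $\cG,\cG' \in \cT_{c,K}$) are supported on some $I$-orbit closure $\overline{\cO}_v \subset \Gr$ (resp.\ some $K$-orbit closure $\overline{LX}_\lambda \subset LX$). Passing to the quotient, the $I$-action on $\overline{\cO}_v$ factors through a finite-type group quotient, so $I\backslash \overline{\cO}_v$ is a finite-type Artin stack; similarly $K\backslash \overline{LX}_\lambda$ is a finite-type Artin stack. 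Writing $\cF = (\overline{i}_v)_* \tilde{\cF}$, etc., the enriched mapping spaces $\underline{\mathrm{Maps}}_{\cS_I}(\cF,\cF')$ and $\underline{\mathrm{Maps}}_{\cT_K}(\cG,\cG')$ are computed at the finite-type level.

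Since the tensor product in $\Pr^L$ commutes with filtered colimits, the presentations of $\cS_I$ and $\cT_K$ as filtered colimits of $\cD$-module categories on finite-type substacks induce
$$\cS_I \otimes \cT_K \;\simeq\; \varinjlim_{v,\lambda} \, \cD(I\backslash \overline{\cO}_v) \otimes \cD(K \backslash \overline{LX}_\lambda),$$
so the left-hand side of the claim may be computed within a single term $\cD(I\backslash \overline{\cO}_v) \otimes \cD(K \backslash \overline{LX}_\lambda)$. I would then invoke the Künneth-type equivalence $\cD(\cX) \otimes \cD(\cY) \simeq \cD(\cX \times \cY)$ for finite-type Artin stacks (standard from Gaitsgory--Rozenblyum), under which $\cF \otimes \cG$ corresponds to the external product $\cF \boxtimes \cG$. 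The classical Künneth formula for $\cD$-modules on a product of finite-type Artin stacks (proved by descent from smooth atlases, where it reduces to the scheme case) then supplies the desired factorization of $\underline{\mathrm{Maps}}$.

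The main obstacle is tracking the coherence hypothesis cleanly through the large-category formalism: one must verify that ``finite-type support'' of coherent objects interacts correctly with filtered colimit presentations of $\cS_I$ and $\cT_K$, and that the identification $\cF \otimes \cG \mapsto \cF \boxtimes \cG$ extends to the enriched mapping space computation. Once the reduction to a single pair $(\overline{\cO}_v, \overline{LX}_\lambda)$ is in hand, the residual difficulty is purely finite-dimensional and follows from established Künneth statements; the rest is bookkeeping with the placid presentations.
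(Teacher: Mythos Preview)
Your overall strategy---reduce to finite-type pieces via the placid presentations and then invoke a K\"unneth formula on finite-type Artin stacks---matches the paper's. The gap is in the final step. You treat the ``classical K\"unneth formula for $\cD$-modules on a product of finite-type Artin stacks'' as established, ``proved by descent from smooth atlases, where it reduces to the scheme case.'' But coherent $\cD$-modules on a stack with reductive stabilizers (such as $K_i\backslash \overline{LX}_j$) are generally \emph{not compact}, and descent expresses $\underline{\Map}_{\cD(\cX)}(\cF,\cF')$ as a totalization over the \v{C}ech nerve. Commuting that limit with the tensor product $\otimes_k$ is exactly what fails in general; it is not bookkeeping, it is the heart of the matter. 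The K\"unneth identity $\underline{\Map}(\cF\boxtimes\cG,\cF'\boxtimes\cG')\simeq \underline{\Map}(\cF,\cF')\otimes\underline{\Map}(\cG,\cG')$ is automatic only when the source objects are compact (dualizable), which is why the scheme case is easy and the stack case is not.

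The paper closes this gap via the Drinfeld--Gaitsgory approximation lemma for QCA stacks: any coherent $\cD$-module admits a map from a compact object whose cone lies in $\cD^{\le -N}$ for $N$ arbitrarily large. One then applies the K\"unneth formula for compact objects (from Gaitsgory--Rozenblyum) to the approximations $c_0,d_0$, and checks that the resulting equivalence holds after $\tau^{\le -m}$ for all $m$. Your proposal omits this step entirely; once you add it, your argument becomes the paper's argument.

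Two smaller points. First, in the paper's notation the $\cO_v$ indexed by $W_{X,\ext}$ are $I$-orbits on $LX$, not on $\Gr$; for $\Gr$ one uses the Schubert stratification $\Gr_{\le\alpha}$. Second, the $K$-orbit closures $\overline{LX}_\lambda$ are placid schemes, not finite-type, so a further descent to a finite-type constituent $K_i\backslash \overline{LX}_j$ of the placid presentation is required before the QCA machinery applies; the paper does this explicitly.
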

\begin{proof}
    We employ a similar strategy as in \cite[Proposition 4.6.2]{gaitsgory2011ind}\footnote{We thank Justin Campbell for pointing us to this reference.}. By \autoref{thm:chen_Ti}, the colimit of the $K$-orbit closures $\overline {LX}_\beta$ in $LX$ provides a $K$-placid ind-scheme presentation for $LX$, and we have $\cD(LX)\simeq\varinjlim_\beta \cD(\overline {LX}_\beta)$. Since $\cD(\Gr)\simeq \varinjlim_{\alpha\in \mathbb{X}_\bullet^+} \cD(\Gr_{\leq \alpha})$, 
external tensor product induces an equivalence 
    $$
    \cD(\Gr_{\leq \alpha})\otimes \cD(\overline {LX}_\beta)\simeq \cD(\Gr_{\leq \alpha}\times \overline {LX}_\beta)
    $$
    by \cite[Lemma 6.9.2]{raskin_dmodules_infinite_var}. Taking colimits, we obtain an equivalence 
    $$
\cD(\Gr)\otimes \cD(LX)\simeq \cD(\Gr\times LX).
    $$
    It follows from \cite[Prop. 6.7.1]{raskin_dmodules_infinite_var} that, there is an equivalence
    $$
\cS_I\otimes \cT_K\simeq \cD(I\backslash\Gr\times K\backslash LX).
    $$
We may assume $\cF=\iota_*c$ and $\cG=i_*d$  where $\iota: \Gr_{\leq\alpha}\hookrightarrow \Gr$, $c\in \cD_c(\Gr_{\leq \alpha})^I$ for some $\alpha\in \mathbb{X}_*(T)^+$; and $i: \overline{LX}_\beta\hookrightarrow LX$,  $d\in \cD_c(\overline{LX}_\beta)^K$ for some $\beta\in W_{X,\rm ext}$. 

It suffices to prove 
$$
\underline{\mathrm{Maps}} (\cF\boxtimes \cG, \iota^!\cF'\boxtimes i^!\cG')\simeq \underline{\mathrm{Maps}}(c,\iota^!\cF')\otimes \underline{\mathrm{Maps}}(d, i^!\cG').
$$
\noi where we used the fact that $i$ and $\iota$ are closed embeddings and $(\iota \times i)_*:= \iota_* \otimes i_*$ has the right adjoint $(\iota \otimes i)^! := \iota^! \otimes i^!$. Moreover, since $\overline{LX}_\beta$ is $K$-placid by \cite[Prop. 29.(ii)]{chenyi_2025singularitiesorbitclosuresloop}, we may further assume that $d\in \cD_c(K_i\backslash \overline{LX}_j)$ where $\overline{LX}_j$ is a constituent of the placid presentation of $\overline{LX}_\beta$ and $K_i$ is a finite-type quotient of $K$ through which $K$ acts on $\overline{LX}_j$.

Assume that  we have an approximation for both $c$ and $d$ 
\[ 
c_0 \ra c  \ra c_1 ,\
d_0 \ra d \ra d_1, 
\]  
\noi 
where $c_0, d_0$ are compact objects and the cones $c_1, d_1$ are concentrated in negative enough cohomological degrees. For $m$ large enough, we have isomorphisms
\begin{align*}
\tau^{\leq -m}(\underline{\Map}(c,\iota^!\cF'))\rightarrow \tau^{\leq -m}(\underline{\Map}(c_0,\iota^!\cF')), \\
\tau^{\leq -m}(\underline{\Map}(d,i^!\cG'))\rightarrow \tau^{\leq -m}(\underline{\Map}(d_0,i^!\cG')),
\end{align*}
\noi which induce equivalences
\begin{align*}
    \tau^{\leq -m}(\underline{\Map}(c,\iota^!\cF')\otimes \underline{\Map}_{\cT_K}(d,i^!\cG')) & \rightarrow \tau^{\leq -m}(\underline{\Map}(c_0,\iota^!\cF')\otimes \underline{\Map}(d_0,i^!\cG')),\\
    \tau^{\leq -m}(\underline{\mathrm{Maps}}(c\boxtimes d, \iota^!\cF'\boxtimes i^!\cG')) & \rightarrow    \tau^{\leq -m}(\underline{\mathrm{Maps}} (c_0\boxtimes d_0, \iota^!\cF'\boxtimes i^!\cG')).
\end{align*} 
We have by \cite[Proposition 10.5.8]{GR17vol1} an equivalence 
$$
\begin{tikzcd}
  \underline{\Map}(c_0, \iota^! \cF') \otimes \underline{\Map}(d_0, i^! \cG')    \rar & \underline{\Map}(c_0\boxtimes d_0, \iota^! \cF\ \boxtimes i^! \cG' ),
\end{tikzcd}
$$
which gives the desired equivalence.

\noi Now we justify the existence of the two approximations in the above.  By our discussion, $K_i\backslash \overline{LX}_j$ is a "quasi compact with affine automorphism group" (QCA) stack \cite[Def 1.1.8]{drinfeld2013some}. In other words, it is quasi-compact, whose geometric points have an affine automorphism group, and its classical inertia stack is finite over itself.

By \cite[Lemma 9.4.7]{drinfeld2013some}, there exists $d_0\in \cD(K_i\backslash \overline{LX}_j)^\omega$ such that there exists a morphism $d_0\rightarrow d$  and its cone $d_1$ belongs to $\cD(K_i\backslash \overline{LX}_j)^{\leq -N}$ some $N$ large enough. Completely analogous argument gives a similar approximation of $c$ cf.\ \cite[Sec. 12.2.1]{arinkin2014singularsupportcoherentsheaves}.  We thus complete the proof.
\end{proof}

 \subsection{Calculation of monads}
 \label{monad_insertion}
 \autoref{lemma:fully_faithful} suggests that we need to compute the actions of monads
 $$
 {i} ^{R}\circ  i,\ {i} ^{\mathrm{r},R}\circ  i^{\rm r}
 $$
 for objects in the full subcategory $\cS_{c,I}\otimes \cT_{c,K}$.
 
\begin{lem}\label{lemma:calculation_of_monads}
    For any $\cF\otimes \cG\in \cS_{c,I}\otimes\cT_{c,K}$, 
    $$
\theta\circ {i} ^{\mathrm{r},R}\circ  i^{\rm r}(\cF\otimes \cG)\simeq i ^{R}\circ  {i}\circ \theta (\cF\otimes \cG).
    $$
\end{lem}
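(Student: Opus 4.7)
The plan is to verify this compatibility via a Beck--Chevalley argument, leveraging the rigidity of $\cH_K^{\rm ren}$ established in \cite{campbell2024proof} and the preservation of coherence under convolution. Using the commutativity of (\ref{commdiagran:2}), namely $i \circ \theta = \Theta \circ i^{\rm r}$, the claim is equivalent to showing that the canonical (mate) natural transformation
$$
\theta \circ i^{\mathrm{r},R} \longrightarrow i^R \circ \Theta
$$
is an equivalence when evaluated at $i^{\rm r}(\cF \otimes \cG)$ for coherent $\cF, \cG$. This mate transformation is obtained formally from the adjunctions $i^{\rm r} \dashv i^{\mathrm{r},R}$ and $i \dashv i^R$ together with the commutativity of (\ref{commdiagran:2}).

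Next, I would compute both monads $i^{\mathrm{r},R} \circ i^{\rm r}$ and $i^R \circ i$ via the two-sided bar construction. The relative tensor product $\cS_I^{\rm ren} \otimes_{\cH_K^{\rm ren}} \cT_K^{\rm ren}$ is a geometric realization of the bar simplicial diagram $[n] \mapsto \cS_I^{\rm ren} \otimes (\cH_K^{\rm ren})^{\otimes n} \otimes \cT_K^{\rm ren}$, and the right adjoint $i^{\mathrm{r},R}$ to insertion is computed by the associated cobar cosimplicial totalization. By rigidity of $\cH_K^{\rm ren}$, this cobar resolution collapses to an explicit finite expression: the monad $i^{\mathrm{r},R} i^{\rm r}$ acts on $\cF \otimes \cG$ by convolution with the coevaluation of the unit object $1_{\cH_K^{\rm ren}}$, i.e.\ by tensoring with a ``Hochschild/unit object'' $R_{\rm ren} \in \cH_K^{\rm ren} \otimes (\cH_K^{\rm ren})^{\rm rev}$. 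The analogous expression with $\cH_K$ in place of $\cH_K^{\rm ren}$ computes $i^R i$.

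To compare the two expressions via $\theta$, I would verify the following termwise facts on coherent inputs: (i) the restriction of each $\theta_j$ to compact (hence coherent) objects of the renormalized categories coincides with the defining embedding $\cD_c \hookrightarrow \cD$, so in particular $\theta_2(1_{\cH_K^{\rm ren}}) = 1_{\cH_K}$; (ii) convolution by coherent Hecke kernels preserves coherence on both $\cS_I$ and $\cT_K$, thanks to the ind-properness of the convolution morphism $q$ (as used in \autoref{lemma:unrenormalization}) together with the K\"{u}nneth identification \autoref{lem:kunneth_property}; (iii) consequently every term in the explicit formula for $i^{\mathrm{r},R} i^{\rm r}(\cF \otimes \cG)$ is coherent, and applying $\theta$ reproduces the corresponding coherent term in $i^R i(\theta(\cF \otimes \cG))$. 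Assembling these observations yields the required equivalence.

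The main obstacle will be pinning down the explicit form of the monad on the unrenormalized side: while $\cH_K^{\rm ren}$ is rigid, the unrenormalized $\cH_K$ is not, and so we cannot directly invoke rigidity there. The fix is that the equivalence need only be checked after applying $\theta$ to inputs of the form $i^{\rm r}(\cF \otimes \cG)$ with $\cF, \cG$ coherent; since $\theta$ restricted to the coherent subcategory is the canonical embedding, we can transport the rigid computation from $\cH_K^{\rm ren}$ to $\cH_K$ along this coherent locus. Making this transport rigorous --- equivalently, showing that the Beck--Chevalley mate above becomes invertible on the coherent subcategory --- is the technical heart of the argument and should ultimately reduce to a finite-type statement for which the termwise analysis in (i)--(iii) suffices.
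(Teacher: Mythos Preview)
Your Beck--Chevalley setup is correct, and the paper's proof also begins by recasting the problem as comparing the two monads $i^{\mathrm r,R}i^{\mathrm r}$ and $i^Ri$. But the paper takes a different and more direct route to the actual comparison: rather than working with bar/cobar resolutions and rigidity, it invokes \cite[Corollary~C.2.3]{gaitsgory2015sheaves} to identify the monad $i^Ri$ abstractly as $(\act\otimes\id)\circ(\id\otimes\act)^R$, and then interprets this \emph{geometrically} as the pull--push $q_*p^!p_*q^*$ along the convolution correspondence
\[
I\backslash\Gr\times K\backslash LX \xleftarrow{\,q\,} I\backslash LG\times^K LX \xrightarrow{\,p\,} I\backslash LX.
\]
The same identification applies verbatim in the renormalized setting, giving $i^{\mathrm r,R}i^{\mathrm r}\simeq q_*^{\ren}p^{!,\ren}p_*^{\ren}q^{*,\ren}$. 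Since each of the four functors $q_*,p^!,p_*,q^*$ agrees with its renormalized counterpart on coherent inputs, the comparison on $\cS_{c,I}\otimes\cT_{c,K}$ is immediate.

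Your proposal, by contrast, does not close the loop. The gap is exactly where you flag it: you have no independent handle on $i^Ri$ in the unrenormalized setting. Rigidity of $\cH_K^{\ren}$ lets you write down $i^{\mathrm r,R}i^{\mathrm r}$ nicely, but your proposed ``fix''---transporting that computation along $\theta$ on coherent objects---presupposes the very compatibility you are trying to prove. Knowing that $\theta$ restricts to the identity on coherent objects tells you nothing about $i^R$ unless you already know $i^Ri$ lands in (or is computed by) coherent data, and that requires some description of $i^Ri$ on the unrenormalized side. The paper's geometric formula $q_*p^!p_*q^*$ supplies precisely this missing description, uniformly in both settings, which is why the argument there goes through without circularity.
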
 

\begin{proof}
    
Our strategy is completely  similar to that in the proof of \cite[Theorem 3.1.5]{campbell2021affine}. We first work in the non-renormalized categories. Consider the convolution diagram
$$
\begin{tikzcd}
    I\backslash \Gr\times K\backslash LX& I\backslash LG\times^{K}LX\arrow [l,"q",swap] \arrow[r,"p"]& I\backslash LX,
\end{tikzcd}
$$
where $p,q$ are the natural projection morphisms. We note that $p$ is a $K$-torsor, and $p^*$ thus admit a left adjoint $p_*$ by \cite[Proposition 6.18.1]{raskin_dmodules_infinite_var}. Since $p$ is ind-proper, it follows from \cite[Section 3.22]{raskin_dmodules_infinite_var} that $p_*$ has a right adjoint $p^!$. 

By \cite[Corollary C.2.3]{gaitsgory2015sheaves}, the functor $i^R$ is monadic and the monad $i^R\circ i$ may be identified with 
$$
(\act_{\cD(I\backslash\Gr),\ \cH_K}\otimes \mathrm{id}_{\cD(K\backslash\Gr)})\otimes(\mathrm{id}_{\cD(I\backslash\Gr)}\otimes\act_{\cH_K,\ \cD(K\backslash LX)})^R,
$$
as plain endo-functors, where $\mathrm{act}_{\bullet,\bullet}$ are the natural action functors. 
Spreading out the above monad as pull-push along convolution diagrams as in \cite[Theorem 3.1.5]{campbell2021affine}, we have an isomorphism
$$
i^R\circ i\simeq q_*p^!p_* q^*.
$$
Similarly, 
$$
  i^{\mathrm{r}, R}\circ  i^{\rm r}\simeq q_*^\ren p^{!\ren} p_*^\ren q^{*\ren}.
$$
Under the renormalization functor $\iota':\cS_I\otimes \cT_K\hookrightarrow \cS_I^\ren\otimes \cT_K^\ren$, we conclude that 
$$
i^{\mathrm{r},R}\circ i^{\rm r}(\cF\otimes\cG)\simeq \iota'\circ i^R\circ i(\cF\otimes\cG)
$$
since $\cF\otimes\cG\in\cS_{c,I}\otimes \cT_{c,K}$. The statement follows since $\theta$ is continuous and restricts to the identity on $\cS_{c,I}\otimes \cT_{c,K}$. 
\end{proof}

\begin{lemma}\label{lemma:lemma}
    For any $\cF\otimes \cG, \cF'\otimes \cG'\in\cS_{c,I}\otimes\cT_{c,K}$, $\theta$ induces an equivalence
    $$
\mathrm{Maps}_{\cS_I^{\rm ren}\otimes\cT_K^{\rm ren} } (\cF\otimes\cG,\cF'\otimes\cG')\simeq \mathrm{Maps}_{\cS_I\otimes\cT_K} (\theta(\cF\otimes\cG),\theta(\cF'\otimes \cG'))
    $$
\end{lemma}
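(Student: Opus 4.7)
The plan is to reduce both mapping spaces to external tensor products of mapping spaces on each factor, and then to exploit the fact that the unrenormalization functors $\theta_i$ restrict to the fully faithful embedding $\cD_c \hookrightarrow \cD$ on compact generators.

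First, for the right-hand side I would directly invoke the K\"unneth-type formula \autoref{lem:kunneth_property}, yielding a natural equivalence
\[
\underline{\mathrm{Maps}}_{\cS_I\otimes\cT_K}\bigl(\theta(\cF\otimes\cG),\theta(\cF'\otimes\cG')\bigr)\;\simeq\;\underline{\mathrm{Maps}}_{\cS_I}(\cF,\cF')\otimes\underline{\mathrm{Maps}}_{\cT_K}(\cG,\cG').
\]
(Recall that $\theta=\theta_1\otimes\theta_3$ acts as the identity on objects that are already represented by coherent $\cD$-modules, so one may write $\theta(\cF\otimes\cG)=\cF\otimes\cG$.) For the left-hand side, I would use the basic identification
\[
\cS_I^{\rm ren}\otimes\cT_K^{\rm ren}\;=\;\Ind(\cS_{c,I})\otimes\Ind(\cT_{c,K})\;\simeq\;\Ind\bigl(\cS_{c,I}\otimes\cT_{c,K}\bigr),
\]
coming from the symmetric monoidal adjunction $(\Ind,(-)^\omega)$ between $\Cat^\perf$ and $\Pr^L_{\st,\omega}$ of \autoref{def:small_relative_tensor}. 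Since $\cF\otimes\cG$ and $\cF'\otimes\cG'$ are compact in this Ind-category, the mapping space is computed in $\cS_{c,I}\otimes\cT_{c,K}$, and the universal property of $\otimes$ in $\Cat^\perf$ gives a K\"unneth identification at the compact level:
\[
\underline{\mathrm{Maps}}_{\cS_I^{\rm ren}\otimes\cT_K^{\rm ren}}(\cF\otimes\cG,\cF'\otimes\cG')\;\simeq\;\underline{\mathrm{Maps}}_{\cS_{c,I}}(\cF,\cF')\otimes\underline{\mathrm{Maps}}_{\cT_{c,K}}(\cG,\cG').
\]

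Next, since $\cS_{c,I}\hookrightarrow\cS_I$ and $\cT_{c,K}\hookrightarrow\cT_K$ are by definition fully faithful subcategories (coherent $\cD$-modules sit fully faithfully inside all $\cD$-modules), we obtain equivalences
\[
\underline{\mathrm{Maps}}_{\cS_{c,I}}(\cF,\cF')\simeq\underline{\mathrm{Maps}}_{\cS_I}(\cF,\cF'),\qquad \underline{\mathrm{Maps}}_{\cT_{c,K}}(\cG,\cG')\simeq\underline{\mathrm{Maps}}_{\cT_K}(\cG,\cG').
\]
Combining the three displayed identifications, the comparison map induced by $\theta$ becomes an equivalence, proving the lemma after passing to the underlying anima.

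The main thing to check carefully is that the comparison of the two K\"unneth decompositions really is the map induced by $\theta$, rather than only an abstract equivalence. This is a functoriality/naturality check: one verifies that at the level of compact generators, $\theta_1$ and $\theta_3$ are precisely the Yoneda/ind-completion inclusions restricted to $\cS_{c,I}$ and $\cT_{c,K}$, so their external product intertwines the universal K\"unneth isomorphism in $\Ind(\cS_{c,I}\otimes\cT_{c,K})$ with the K\"unneth isomorphism of \autoref{lem:kunneth_property} in $\cS_I\otimes\cT_K$. This is the step that requires the most care but is ultimately a formal consequence of the universal property of $\Ind$ and the construction of $\theta$.
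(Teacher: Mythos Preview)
Your proposal is correct and follows essentially the same route as the paper: reduce both sides to a K\"unneth decomposition and use that the inclusions $\cS_{c,I}\hookrightarrow\cS_I$, $\cT_{c,K}\hookrightarrow\cT_K$ are fully faithful. The only substantive difference is the justification of the K\"unneth formula on the renormalized side: the paper invokes \cite[Proposition~10.5.8]{GR17vol1} directly (this is the K\"unneth formula for mapping spaces in a tensor product of compactly generated categories when the first argument is compact), whereas you appeal to ``the universal property of $\otimes$ in $\Cat^\perf$''. That universal property characterizes exact functors \emph{out} of the tensor product rather than mapping spaces \emph{in} it, so your phrasing is a little imprecise; the cleanest fix is simply to cite the Gaitsgory--Rozenblyum result as the paper does. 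Your explicit remark on naturality is a nice addition---the paper handles this implicitly via a commutative diagram.
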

\begin{proof}
    By definition $\theta$ restricts to $\mathrm{id}\otimes \mathrm{id}$ on $\cS_{c,I}\otimes \cT_{c,K}$. Thus, it suffices to prove
    $$
    \mathrm{Maps}_{\cS_I^{\rm ren}\otimes\cT_K^{\rm ren} } (\cF\otimes\cG,\cF'\otimes\cG')\simeq \mathrm{Maps}_{\cS_I\otimes\cT_K} (\cF\otimes\cG,\cF'\otimes \cG').
    $$
    Consider the following commutative diagram
    $$
 \begin{tikzcd}
        \mathrm{Maps}_{\cS_I^{\rm ren}\otimes\cT_K^{\rm ren} } (\cF\otimes\cG,\cF'\otimes\cG') \arrow[r,"\theta"] \arrow[dd,"K_1",swap]&  \mathrm{Maps}_{\cS_I\otimes\cT_K} (\cF\otimes\cG,\cF'\otimes\cG') \arrow[dd,"K_2"]\\
       \\
        \mathrm{Maps}_{\cS_I^{\ren}}(\cF,\cF')\otimes \mathrm{Maps}_{\cT_K^{\rm ren}}(\cG,\cG')\arrow[dd,"\simeq",swap] & \mathrm{Maps}_{\cS_I}(\cF,\cF')\otimes \mathrm{Maps}_{\cT_K}(\cG,\cG')\arrow[dd,"="]\\
        \\
        \mathrm{Maps}_{\cS_{c,I}}(\cF,\cF')\otimes \mathrm{Maps}_{\cT_{c,K}}(\cG,\cG')\arrow[r,"\iota_1\otimes\iota_3"]\ & \mathrm{Maps}_{\cS_I}(\cF,\cF')\otimes \mathrm{Maps}_{\cT_K}(\cG,\cG').
 \end{tikzcd}
 $$
 The functor $K_1$ is an equivalence by the K\"{u}nneth type formula cf. \cite[Proposition 10.5.8]{GR17vol1}. The functor $\iota_1\otimes \iota_3$ is clearly an equivalence. The statement follows from the fact that $K_2$ is an equivalence by \autoref{lem:kunneth_property}.
\end{proof}

\begin{proposition}\label{prop:monad_map_space}
        The morphism 
    $$
\vartheta: \mathrm{Maps}_{\cS_I^{\rm ren}\otimes\cT_K^{\rm ren} } (\cF\otimes\cG,i^{\mathrm{r},R}\circ {i}^{\rm r} (\cF'\otimes\cG'))
      \xrightarrow[]{} \mathrm{Maps}_{\cS_I\otimes\cT_K}(\theta(\cF\otimes\cG),i^R\circ {i} \circ\theta(\cF'\otimes\cG'))
    $$
    is an equivalence.
\end{proposition}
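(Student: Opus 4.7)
The plan is to exploit the explicit description of both monads as pull-push along a convolution diagram, given in the proof of \autoref{lemma:calculation_of_monads}, and to reduce both mapping spaces to coincident mapping spaces between coherent $\cD$-modules on $I\backslash LX$. By construction, $\vartheta$ is induced by $\Theta$ through the Beck--Chevalley square of \autoref{lemma:fully_faithful}, so it suffices to establish the equivalence along this adjunction-transformed route.

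First, I would recall the convolution diagram
\[
I\backslash \Gr \times K\backslash LX \xleftarrow{q} I\backslash LG \times^K LX \xrightarrow{p} I\backslash LX,
\]
with $q$ a smooth projection and $p$ ind-proper, and the identifications $i^{\mathrm{r},R}\circ i^{\rm r} \simeq q_*^\ren p^{!\ren} p_*^\ren q^{*\ren}$ and $i^R \circ i \simeq q_* p^! p_* q^*$. Applying the adjunctions $(q^*,q_*)$ and $(p_*,p^!)$ in both the renormalized and unrenormalized settings, the two sides of $\vartheta$ become
\[
\mathrm{LHS} \simeq \mathrm{Maps}_{\cD^\ren(I\backslash LX)}(\cF \ast \cG,\ \cF' \ast \cG'), \qquad \mathrm{RHS} \simeq \mathrm{Maps}_{\cD(I\backslash LX)}(\cF \ast \cG,\ \cF' \ast \cG'),
\]
where $\cF \ast \cG := p_*^\ren q^{*\ren}(\cF \otimes \cG)$ denotes the convolution, and we use that the ind-extensions $q^{*\ren}$ and $p_*^\ren$ agree with $q^*$ and $p_*$ on coherent input, so that $\theta(\cF \ast \cG)$ is canonically identified with the corresponding non-renormalized convolution.

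Next, I would verify that $\cF \ast \cG$ lies in $\cD_c(I\backslash LX)$: the functor $q^{*\ren}$ preserves coherence by smoothness of $q$, and $p_*^\ren$ preserves coherence using that $p$ is ind-proper and the orbit-stratified structure of $LG \times^K LX$ coming from the placid presentation in \autoref{thm:chen_Ti}. Once this is established, both mapping spaces agree with $\mathrm{Maps}_{\cD_c(I\backslash LX)}(\cF \ast \cG,\ \cF' \ast \cG')$, since the full-subcategory inclusions $\cD_c(I\backslash LX) \hookrightarrow \cD^\ren(I\backslash LX)$ and $\cD_c(I\backslash LX) \hookrightarrow \cD(I\backslash LX)$ are both fully faithful. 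Unpacking the Beck--Chevalley definition of $\vartheta$ shows it becomes the identity under this chain of identifications, yielding the desired equivalence.

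The main obstacle is verifying coherence preservation of the convolution in the ind-proper setting: although $p$ is not literally proper, one must invoke the orbit-level pieces from \autoref{thm:chen_Ti} to reduce $p_*^\ren$ to proper pushforwards on finite-type subvarieties, where coherence preservation is standard; for $\cF \otimes \cG \in \cS_{c,I}\otimes \cT_{c,K}$ the support is contained in a finite union of $I$-orbits of $\Gr$ convolved with a union of $K$-orbits of $LX$, so only a bounded proper piece actually contributes. A secondary, more formal point is confirming that the adjunction manipulations in the first step are canonically compatible with the Beck--Chevalley construction of $\vartheta$, which requires careful tracking of $2$-categorical data but presents no conceptual difficulty.
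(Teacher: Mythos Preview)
Your approach is correct and takes a genuinely different route from the paper. The paper's proof stays on the product side: it invokes \autoref{lemma:calculation_of_monads} to identify $\theta\circ i^{\mathrm r,R}\circ i^{\mathrm r}$ with $i^R\circ i\circ\theta$ on coherent simple tensors, then reduces (by compact generation of $\cS_I^\ren\otimes\cT_K^\ren$) to comparing mapping spaces between objects of $\cS_{c,I}\otimes\cT_{c,K}$ in the renormalized versus non-renormalized product, and closes with the K\"unneth-type formula of \autoref{lem:kunneth_property} via \autoref{lemma:lemma}. You instead transport both sides through the adjunctions $(q^*,q_*)$ and $(p_*,p^!)$ to land in $\cD^{\ren}(I\backslash LX)$ and $\cD(I\backslash LX)$, and finish by noting that $\cF\ast\cG$ is coherent---a fact the paper also needs later, in the construction of $F_c'$. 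Your route bypasses \autoref{lem:kunneth_property} entirely, which is the most delicate ingredient in the paper's argument (it requires the QCA approximation step); what you use in its place (coherence of convolution plus full faithfulness of $\cD_c\hookrightarrow\cD$ and $\cD_c\hookrightarrow\cD^\ren$) is both lighter and already required downstream. The trade-off is that your argument implicitly needs renormalized sheaf categories on the intermediate space $I\backslash LG\times^K LX$ in order to interpret the individual adjoints $q_*^\ren$, $p^{!\ren}$ as honest functors rather than merely as constituents of the composite monad; the paper's route stays internal to the tensor product and avoids this. Your acknowledged ``secondary point''---that the Beck--Chevalley map $\vartheta$ is carried to the unrenormalization comparison on $I\backslash LX$ under these adjunctions---is indeed formal once the intermediate categories are in place.
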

\begin{proof}
By \autoref{lemma:calculation_of_monads} and that fact that $\cS^\ren_I\otimes \cT^\ren_K$ is compactly generated by $\cS^\ren_{c,I}\otimes \cT^\ren_{c,K}$, it suffices to prove the that $\theta$ induces an equivalence
$$
\mathrm{Maps}_{\cS_I^{\rm ren}\otimes\cT_K^{\rm ren} } (\cF\otimes\cG, \cF'\otimes\cG')
      \simeq\ \mathrm{Maps}_{\cS_I\otimes\cT_K}(\cF\otimes\cG, \cF'\otimes\cG')
$$
for $ \cF'\otimes \cG'\in\cS_{c,I}\otimes\cT_{c,K}$, which follows from \autoref{lemma:calculation_of_monads}.
\end{proof}

\subsection{Construction of  \autoref{thm:F_cstr_Sat}.}
We complete the proof of \Cref{thm:F_cstr_Sat} in this subsection.

We have shown that 
$$
\iota:    \cS_{c,I}\otimes_{\cH_{c,K}}\cT_{c,K} \rightarrow     \cS_I\otimes_{\cH_{K}}\cT_K
$$
is a fully faithful embedding. Composing with (\ref{equation:I-equiv-functor}), we obtain a fully faithful functor 
$$
F_{c}: \cS_{c,I}\otimes_{\cH_{c,K}}\cT_{c,K}\rightarrow \cT_I
$$
of $\cH_{c,I}$-modules.

For any $\cF\otimes \cG\in\cS_{c,I}\otimes\cT_{c,K}$, we know that $F\circ i (\cF\otimes \cG)$ is given by pull-push along the convolution diagram
$$
\begin{tikzcd}
    I\backslash \Gr\times K\backslash LX& I\backslash LG\times^{K}LX\arrow [l,"p",swap] \arrow[r,"q"]& I\backslash LX.
\end{tikzcd}
$$
 Since $p^*$ and $q_*$ preserve coherent objects,  $F_{c}$ factors through a fully faithful functor
 $$
 \cD_c(I\backslash \Gr)\otimes_{\cH_{c,K}}\cD_c(K\backslash LX) \rightarrow \cD_c(I\backslash LX).
 $$
 Thus, we obtain a fully faithful functor 
 $$
 F_{c}':\cD_c(I\backslash \Gr)\otimes_{\cH_{c,K}}\cD_c(K\backslash LX)^\Sat \rightarrow \cD_c(I\backslash LX).
 $$
The essential image of this functor is $\cD_c(I\backslash LX)^{\Sat}$,
by definition \autoref{def:satake_subcategory}.

\section{Proof of the Main Theorem}\label{section:proof_of_the_main_theorem}

Finally, we are in the place of proving the tamely ramified relative local conjecture. We pass to the spectral side by the integral transform of Ben--Zvi--Francis--Nadler \cite{ben2010integral}.

\begin{thm} 
\label{thm:iwahori_proof}
    Under \autoref{assum:intro}, there is an equivalence
    \begin{equation}\label{equation:last}
        \LL^{\rm Sat}: \cD_{{c}}(I\backslash LX)^\Sat\simeq \mathrm{Perf}(\mathrm{sh}^{1/2}(\tilde{\check{\mathfrak{g}}}^*(2)\times_{\check{\mathfrak{g}}^*(2)}\check{M})/\check{G}),
    \end{equation}
    which is compatible with the actions coming from 
    \[
\mathrm{End}_{\cH_{c,K}}(\cD_c(I\backslash \Gr))\simeq \mathrm{End}_{\Perf(\check{\mathfrak{g}}^*/\check{G})}(\Perf(\tilde{\check{\mathfrak{g}}}^\ast[2]/\check{G}))
\]

\end{thm}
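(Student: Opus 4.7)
The plan is to assemble the equivalence from three inputs on the spectral side and one bridge on the automorphic side. The automorphic bridge is \autoref{thm:F_cstr_Sat}, which we have already established: it identifies
\[
\cD_c(I\backslash LX)^{\Sat} \;\simeq\; \cD_c(I\backslash \Gr)\otimes_{\cH_{c,K}}\cD_c(K\backslash LX)^{\Sat}.
\]
So the remaining task is to compute the right hand side purely on the spectral side and to identify it with $\mathrm{Perf}\bigl(\mathrm{sh}^{1/2}(\tilde{\check{\mathfrak{g}}}^*(2)\times_{\check{\mathfrak{g}}^*(2)}\check{M})/\check{G}\bigr)$.

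For this, I would invoke the three spectral equivalences already available: (i) the derived geometric Satake equivalence of Bezrukavnikov--Finkelberg \cite{bezrukavnikov2008equivariant}, which identifies the monoidal category $\cH_{c,K}\simeq\mathrm{Perf}(\check{\mathfrak g}^*[2]/\check G)$; (ii) the Arkhipov--Bezrukavnikov--Ginzburg equivalence, which identifies the $\cH_{c,K}$-module $\cD_c(I\backslash\Gr)$ with $\mathrm{Perf}(\tilde{\check{\mathfrak g}}^*[2]/\check G)$ over $\mathrm{Perf}(\check{\mathfrak g}^*[2]/\check G)$; and (iii) the unramified relative local conjecture \autoref{Conj:intro}, which under \autoref{assum:intro}(c) gives $\cD_c(K\backslash LX)^{\Sat}\simeq \mathrm{Perf}(\sh^{1/2}(\check M)/\check G)$ compatibly with the $\cH_{c,K}$-action. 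Transporting the relative tensor product along these three equivalences yields
\[
\cD_c(I\backslash \Gr)\otimes_{\cH_{c,K}}\cD_c(K\backslash LX)^{\Sat}
\;\simeq\;
\mathrm{Perf}(\tilde{\check{\mathfrak g}}^*[2]/\check G)\otimes_{\mathrm{Perf}(\check{\mathfrak g}^*[2]/\check G)} \mathrm{Perf}(\sh^{1/2}(\check M)/\check G).
\]

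At this point I would apply the Ben--Zvi--Francis--Nadler integral transform theorem, in the form of equation \eqref{eq:BZFN-kernel}: for perfect stacks $X,Y$ over a base $S$ one has $\mathrm{Perf}(X\times_S Y)\simeq \mathrm{Perf}(X)\otimes_{\mathrm{Perf}(S)}\mathrm{Perf}(Y)$. Applied with $S=\check{\mathfrak g}^*[2]/\check G$, $X=\tilde{\check{\mathfrak g}}^*[2]/\check G$, and $Y=\sh^{1/2}(\check M)/\check G$, and using compatibility of $\sh^{1/2}$ with fiber products (\autoref{prop:shearing_is_symmetric_monoidal}) together with the weight conventions of \autoref{def:cotangent}, the right hand side is identified with $\mathrm{Perf}\bigl(\mathrm{sh}^{1/2}(\tilde{\check{\mathfrak g}}^*(2)\times_{\check{\mathfrak g}^*(2)}\check M)/\check G\bigr)$. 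Stringing these identifications together defines $\LL^{\Sat}$.

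For the action compatibility, I would observe that the same chain of identifications produces \eqref{equation:algebra} as the endomorphism algebra of $\cD_c(I\backslash \Gr)$ (respectively $\mathrm{Perf}(\tilde{\check{\mathfrak g}}^*[2]/\check G)$) as an $\cH_{c,K}$-module (respectively $\mathrm{Perf}(\check{\mathfrak g}^*[2]/\check G)$-module); since $\LL^{\Sat}$ is built as a relative tensor product with this module in the first slot, it is tautologically equivariant for the resulting action. The main obstacles in this plan are the three inputs of the previous paragraph (which are inputs rather than obstacles by \autoref{assum:intro}) and, more seriously, verifying that the spectral equivalences transport compatibly through the relative tensor product: the derived Satake and ABG equivalences are stated at the level of monoidal and module categories, and one must check that the ABG module structure on $\mathrm{Perf}(\tilde{\check{\mathfrak g}}^*[2]/\check G)$ and the Satake module structure on $\mathrm{Perf}(\sh^{1/2}(\check M)/\check G)$ really come from the same $\mathrm{Perf}(\check{\mathfrak g}^*[2]/\check G)$-action that appears in the BZFN identification. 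This compatibility was already used implicitly in \autoref{thm:F_cstr_Sat}, so the remaining work is essentially bookkeeping of monoidal structures, and no new geometric input is needed.
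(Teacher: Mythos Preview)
Your proposal is correct and follows essentially the same route as the paper: reduce to the relative tensor product via \autoref{thm:F_cstr_Sat}, transport each factor to the spectral side using derived Satake, ABG, and the unramified assumption, then apply the Ben--Zvi--Francis--Nadler identification together with the symmetric monoidality of $\sh^{1/2}$ to collapse the tensor product to $\mathrm{Perf}$ of the sheared fiber product. The equivariance for the endomorphism algebra is argued identically in the paper as a tautology from the construction.
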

\begin{proof}
\label{proof:main_iwahori}
    By \Cref{thm:F_cstr_Sat}, it suffices to identify the small relative
    tensor product 
\[
\cD_c(I\backslash \Gr)\otimes_{\cH_{c,K}}\cD_c(K\backslash LX)^\Sat
\]
with
$$
\mathrm{Perf}(\sh^{1/2}(\tilde{\check{\mathfrak{g}}}(2)\times_{\check{\mathfrak{g}}^*(2)}\check{M})
/\check{G}).
$$
We have the following equivalences
\begin{align*}
\cD_c(I\backslash \Gr) & \simeq \mathrm{Perf}(\tilde{\check{\mathfrak{g}}}^*[2]/\check{G})\  (\textnormal{see\ \cite{arkhipov2004quantum,gaitsgory-semiinfinite-intersection-coh}} ), \\
\cH_{c,K} & \simeq \mathrm{Perf}(\check{\mathfrak{g}}^*[2]/\check{G})\  (\textnormal{see \ \cite{bezrukavnikov2008equivariant}}),\\
\cD_c(K\backslash LX)^\Sat & \simeq  \mathrm{Perf} (\mathrm{sh}^{1/2}(\check{M})/\check{G})\  (\textnormal{see\ \autoref{assum:intro}}).
\end{align*}
Since the shearing functor $\mathrm{sh}^{1/2}$ is symmetric monoidal by \autoref{prop:shearing_is_symmetric_monoidal},
\begin{align*}
\mathrm{Perf}(\sh^{1/2}(\tilde{\check{\mathfrak{g}}}(2)\times_{\check{\mathfrak{g}}^*(2)}\check{M})
/\check{G})& \simeq 
\Perf( (\sh^{1/2}(\tilde{\check{\mathfrak{g}}}^*(2))\times_{\sh^{1/2}(\check{\mathfrak{g}}^*(2))} \sh^{1/2}(\check{M})) / \check{G}),\\
& \simeq \Perf((\tilde{\check{\mathfrak{g}}}^*[2]\times_{\check{\mathfrak{g}}^*[2]}
\sh^{1/2}(\check{M}))/\check{G}), \\
& \simeq \mathrm{Perf}(\tilde{\check{\mathfrak{g}}}^*[2]/\check{G})\otimes_{\mathrm{Perf}(\check{\mathfrak{g}}^*[2]/\check{G})} \mathrm{Perf} (\mathrm{sh}^{1/2}(\check{M})/\check{G}),
\end{align*}
where the last equality follows from the integral transform 
\cite{ben2010integral} since all stacks considered here are perfect stacks. 

By the previous argument, $\mathrm{Perf}(\mathrm{sh}^{1/2}(\tilde{\check{\mathfrak{g}}}^*(2)\times_{\check{\mathfrak{g}}^*(2)}\check{M})/\check{G})$ admits a natural action by 
$$
\mathrm{End}_{\Perf(\check{\mathfrak{g}}^*/\check{G})}(\Perf(\tilde{\check{\mathfrak{g}}}^\ast[2]/\check{G})),
$$
which induces a natural action of $\mathrm{End}_{\cH_{c,K}}(\cD_c(I\backslash \Gr))$ on the right hand side of (\ref{equation:last}).
We complete the proof.
\end{proof}

\printbibliography

\end{document}